\numberwithin{equation}{section}
\theoremstyle{plain}
\newtheorem{definition}{Definition}[section]
\newtheorem{proposition}{Proposition}[section]
\newtheorem{theorem}{Theorem}[section]
\newtheorem{lemma}{Lemma}[section]
\newtheorem{remark}{Remark}[section]
\begin{document}

\title{Perfect Simulation of Determinantal Point Processes}
\author{L. Decreusefond \and I. Flint \and K.C. Low}
\address{Telecom Paristech - LTCI UMR 5141, 46 Rue Barrault, 75634 Paris Cedex 13, France}

\begin{abstract}
Determinantal point processes (DPP) serve as a practicable modeling for many applications of repulsive point processes. A known approach for simulation was proposed in \cite{Hough(2006)}, which generate the desired distribution point wise through rejection sampling. Unfortunately, the size of rejection could be very large. In this paper, we investigate the application of perfect simulation via coupling from the past (CFTP) on DPP. We give a general framework for perfect simulation on DPP model. It is shown that the limiting sequence of the time-to-coalescence of the coupling is bounded by
$$O\left(\int_{E}{J(x,x) d\nu(x)}\log{\int_{E}{J(x,x) d\nu(x)}}\right).$$ An application is given to the stationary models in DPP.
\end{abstract}
\subjclass[2000]{Primary: 60G55;Secondary: 62M30,60K35}


\maketitle{}

\section{Introduction}
Determinantal point process stems back from 1975 when O. Maachi introduce it as the `fermion' process with repulsive feature on its points. It is only in the last two decades
that Soshnikov(2000) \cite{Soshnikov(2000)} and Shirai and Takahashi(2003)\cite{ShiraiTaka(2003)} studied further on the mathematical properties of the process structure.
Classical modeling of repulsive point processes are in essence through Gibbs point processes. Since their introduction, DPP have found many applications in random matrix theory and quantum physics.

Simulations of determinantal point processes are mostly based on the idea of Hough et al. in \cite{Hough(2006)} and further discussed in details in \cite{LMR12}. The main drawback of the idea is
due to its large rejections in the sampling. On the other hand, in a Markov chain Monte Carlo method, many of the simulation algorithms are based on the long-running constructions of Markov chain that converges to an equilibrium distribution. The difficulties was in determining the number of steps needed to have such a convergence. Thanks to Propp and Wilson \cite{ProppWilson(1996)}, which suggest with the application of coupling theory, we are able to `exactly' simulate a finite state Markov chain with the desired equilibrium distribution. Although perfect simulation is obviously appealing but we share a common drawback with \cite{Hough(2006)} idea: given a DPP kernel with no explicitly known spectral representation, the statistics (Papangelou conditional intensity) involve in the configurations, it is not simple to extract them from. In general numerical techniques such as Fourier expansion are required because of the limitation of analytical results.

This paper serves as the continuation of work on \cite{Hough(2006)} and \cite{LMR12} in simulating DPP; in \cite{Kendall(1998)} uses the CFTP perfect simulation approach on spatial point processes,
while here we analyze the application of CFTP simulation on determinantal point processes and we provide a lower and a upper bound for the coalescence time in general case.

This paper is organized as follows. We start in Section \ref{sec:Pre} by summarizing the basic notation and recall the definition of a point process, including determinantal point process and its Papangelou conditional intensity. Section \ref{sec:simulation} is devoted to details of perfect simulation of DPP via dominated coupling of Markov chains, i.e. CFTP. In Section \ref{sec:RunTime}, we discuss the limiting behavior of the running time of CFTP simulation and provide a bound for the limiting sequence. Lastly in Section \ref{sec:App}, we apply CFTP algorithm to three stationary DPP models defined by the commonly used covariance functions in multivariate statistical analysis, i.e. Gaussian model, Mat{\'e}rn model and the Cauchy model. 

The statistical analysis conducted in this paper are with R (\texttt{spatstat} library by Braddeley and Turner).

\section{Preliminaries}\label{sec:Pre}
For $(E,\mathcal{B},\nu)$ a measure space endowed with a Polish space $E$ and a Radon measure $\nu$ on $E$.
We denote by $\mathcal{X}$ the set of locally finite point configurations in $E$:
\begin{equation*}
\mathcal{X} :=\{ \xi \subset E: |\xi \cap \Lambda|<\infty, \quad \forall \mbox{ compact 	} \Lambda \subseteq E\}
\end{equation*}
\noindent equipped with the $\sigma$-field
\begin{equation*}
\mathcal{F} := \sigma (\{ \xi \in \mathcal{X}: |\xi \cap \Lambda|=i\}, \quad i\ge 0, \forall \mbox{ compact 	} \Lambda\subseteq E)
\end{equation*}
\noindent where $|X|$ denotes the cardinality of a set $X$.

A point process $X$ on $E$  is then a random point configuration, i.e. a random integer-valued Radon measure on $E$ and $X(\Lambda)$ represents the number of points that fall in $\Lambda$. If $X(\{x\})\in \{0,1\}$ a.s. for all $x\in E$, then $X$ is called simple. 

The joint intensities $\rho_k$ of a simple point process $X$ is the intensity measure of the set of ordered k-tuples of distinct points of $X$, $X^{\wedge k}$.
More precisely, for any family of mutually disjoint subsets $\Lambda_i \subseteq E$, a simple point process $X$ w.r.t. the measure $\nu$, denote $\rho_k:E^k \rightarrow [0,\infty)$ for $k\ge 1$:
\begin{equation*}
\mathrm{E}\left[\prod^k_{i=1}X(\Lambda_i)\right]=\int_{\prod_i{\Lambda_i}}{\rho_k(x_1,\dots,x_k) d\nu(x_1)\dots d\nu(x_k)}.
\end{equation*} 
We assume that $\rho_k(x_1,\dots,x_k)$ vanish if $x_i=x_j$ for some $i\neq j$, see \cite{Peres}.
Let $\mathbb{K}(x,y): E^2 \rightarrow \mathbb{C}$ be a measurable function, locally square integrable on $E^2$.
\begin{definition}
Determinantal (fermion) point process with kernel $\mathbb{K}$ is defined to be a simple point process $X$ on $E$ which satisfies:
\begin{equation*}
\rho_k(x_1,\dots, x_k) = \mathrm{det}(\mathbb{K}(x_i,x_j))_{1\le i,j\le k}, \quad 
\end{equation*}
$\forall k\ge 1 \mbox{ and } x_1,\dots, x_k \in E$, $\rho_k$ w.r.t the measure $\mu$
\end{definition}

\begin{definition}
For a function $f\in L^2$ defined on a compact support $\Lambda$, an integral operator $\mathcal{K}:L^2(E,\nu)\rightarrow L^2(E,\nu)$
corresponding to $\mathbb{K}$ is defined such that: 
\begin{equation*}
\mathcal{K}f(x)=\int_E \mathbb{K}(x,y) f(y) d\nu(y), \mbox{ for a.e. } x\in E
\end{equation*}
and the associated bounded linear operator $\mathcal{K}_\Lambda$ on $L^2(\Lambda,\nu)$ as:
\begin{equation*}
\mathcal{K}_\Lambda f(x)=\int_\Lambda \mathbb{K}(x,y) f(y) d\nu(y), \mbox{ for a.e. } x\in \Lambda.
\end{equation*}
\end{definition}

By spectral theorem, for a compact and self-adjoint operator $\mathcal{K}_\Lambda$, there is an orthonormal basis $\{\varphi_i^\Lambda\}_{i\ge 1}$ of eigenfunctions
for $\mathcal{K}_\Lambda$ on $L^2(\Lambda,\nu)$. Consequently, the kernel $\mathbb{K}$ has a spectral representation:
\begin{equation}\label{eq:SpectralRepresentation}
\mathbb{K}_\Lambda(x,y)=\sum_{i\ge 1}{\lambda_i^\Lambda\varphi_i^\Lambda(x)\overline{\varphi_i^\Lambda(y)}} \quad \mbox{for } x,y \in \Lambda
\end{equation}
\begin{definition} A bounded linear operator $\mathcal{K}_\Lambda$ is said to be of trace class whenever for a complete orthonormal basis $\{\varphi_i^\Lambda\}_{i\ge 1}$ of $L^2(\Lambda,\nu)$,
\begin{eqnarray*}
	\mathbb{K}_\Lambda(x,y)&=&\sum_{i\ge 1}{\lambda_i^\Lambda\varphi_i^\Lambda(x)\overline{\varphi_i^\Lambda(y)}} \quad \mbox{for } x,y \in \Lambda\\
	\mbox{and}\qquad \quad \sum_{i\ge 1}{|\lambda_i^\Lambda|} &<&\infty.
\end{eqnarray*}
Further we define:
\begin{equation*}
	\mathrm{Tr}~\mathcal{K}_\Lambda := \sum_{i\ge 1} \lambda_i^\Lambda
\end{equation*}
\end{definition}
Throughout this paper, we assume that the integral operator of a determinantal point process satisfies the following hypothesis:

\textbf{Hypothesis (H1):} $\mathcal{K}$ is self-adjoint and locally of trace class, and
its spectrum is contained in $[0,1[$ , i.e. $0\le \mathcal{K} \le Id$ in the operator ordering,
and $\|\mathcal{K}\|<1$, where $Id$ denotes the identity operator on $L^2(E,\nu)$.	

Suppose that a DPP $X$ defined on $E$ with its kernel $\mathbb{K}(x,y)=\sum_{i\ge 1}{\lambda_i \varphi_i(x)\overline{\varphi_i(y)}}$, then it has a pleasant property that the size of its configuration is an infinite sum of independent Bernoulli random variables with parameters equal to the eigenvalues, see \cite{Hough(2006)}. Consequently we have:
\begin{equation}\label{eq:DPPmeanVar}
	\mathrm{E}[|X|]=\sum_{i=1}^{\infty}{\lambda_i} \qquad \mathrm{Var}[|X|]=\sum_{i=1}^{\infty}{\lambda_i(1-\lambda_i)}
\end{equation}

\begin{definition} For a trace-class operator $\mathcal{K}$, the Fredholm determinant is defined by:
\begin{equation*}
\mathrm{det}(I-\mathcal{K}) = \exp \left(\sum^{\infty}_{n=1}{\frac{(-1)^{n-1}}{n} \mathrm{Tr}(\mathcal{K}^n)}\right).
\end{equation*}
\end{definition}

From \cite{Soshnikov(2000)}, given an arbitrary compact set $\Lambda \subseteq E$, the Janossy density $j_{\Lambda}(\cdot)$ of a DPP configuration $\xi$ in $\Lambda$ is given by:
\begin{equation}\label{eq:DPPJanossy}
j_{\Lambda}(\xi) = \mathrm{det}(I-\mathcal{K}_\Lambda) \mathrm{det}J_\Lambda(\xi),
\end{equation}
where we define $J_\Lambda:$ $\Lambda^2\rightarrow \mathbb{C}$ by
\begin{equation}\label{eq:Jmatrix}
J_\Lambda(x,y)=\sum_{i\ge 1}{\frac{\lambda_i^\Lambda}{1-\lambda_i^\Lambda}\varphi_i^\Lambda(x)\overline{\varphi_i^\Lambda(y)}} \quad \mbox{for } x,y \in \Lambda,
\end{equation}
and given a configuration $\xi=\{x_1,x_2,\dots,x_n\}$, $J_\Lambda(\xi)$ is a $n\times n$ matrix with
\begin{equation*}
J_\Lambda(\xi)_{(i,j)}=J_\Lambda(x_i,x_j), \qquad \forall~0<i,j\le n.	
\end{equation*}
We define $\mathrm{det}J_\Lambda(\emptyset,\emptyset)$ = 1. For details on Janossy density, see \cite{Daley}.

In the following, we summarize some facts concerning DPP Papangelou (conditional) intensity which characterize the local dependence of particles.
See \cite{Daley2}\cite{Papangelou} for details on Papangelou intensity.

\begin{definition}\label{def:PCI}
Given a compact set $\Lambda \subseteq E$ and a simple finite point process $X$ defined on $E$ with its Janossy density $j_\Lambda$, 
the Papangelou intensity of $X$ in $\Lambda$ defined as:
\begin{equation*}
c_\Lambda(\xi,x) = \frac{j_\Lambda(\xi\cup x)}{j_\Lambda(\xi)}, \quad \mbox{where }\xi \in \mathcal{X},~x \in \Lambda \backslash \xi.
\end{equation*}
If $j_\Lambda(\xi) = 0$ then $c(\xi,x) = 0$.
\end{definition}
For a Poisson point process with intensity function $\rho$, $c(\xi,x)$ is independent of $\xi$, i.e. $c(\xi,x) = \rho(x)$.
In general, $X$ is characterized as:
\begin{eqnarray}
\mbox{attractive if}: && c(\xi,x) \le c(\eta,x) \mbox{ whenever } x \notin \xi \subseteq \eta \nonumber \\
\mbox{repulsive if}: && c(\xi,x) \ge c(\eta,x) \mbox{ whenever } x\notin \xi \subseteq \eta \nonumber
\end{eqnarray}

\begin{definition}[\cite{Daley2}]
For any simple point process $X$ defined on $E$, where $\psi \in \mathcal{B}$, $\zeta \in \mathcal{F}$ and $\mu$ is the distribution of $X$ on $\mathcal{X}$, 
its modified Campbell measure $C_{\mu}$ on the product space $(\mathcal{X}\times E,\mathcal{F} \otimes \mathcal{B})$:
\begin{equation*}
C_{\mu}( \zeta \times \psi ) = \int_\psi \sum_{x\in X}{\delta_{\{(X\backslash x, x)\in (\zeta \times \psi)\}}} ~\mu(dX),
\end{equation*}
where $\delta$ is the Dirac measure.
\end{definition}

Now, we make an additional assumption that $C_{\mu} \ll \nu \otimes \mu$. From the Definition~\ref{def:PCI} and (\ref{eq:DPPJanossy}),
the Papangelou intensity of DPP then follow from the following proposition which we borrow from \cite{Georgii}.
\begin{proposition} Given $C_{\mu} \ll \nu \otimes \mu$, $\forall \mbox{ compact } \Lambda \subseteq E$, DPP Papangelou intenstity is given by:
\begin{equation*}
c_{\Lambda}(\xi,x) = \frac{\mathrm{det}J_\Lambda(\xi \cup x)}{\mathrm{det}J_\Lambda(\xi)}, \quad \mbox{where }\xi \in \mathcal{X}\mbox{ and } x \in \Lambda \backslash \xi.
\end{equation*} 
If $\mathrm{det}J_\Lambda(\xi)=0$, define $c_\Lambda(\xi,x) = 0$.
\end{proposition}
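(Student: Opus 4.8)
The plan is to derive the stated formula by combining the abstract identification of the Papangelou intensity with the Radon--Nikodym density of the modified Campbell measure, and then feeding in the explicit DPP Janossy density from (\ref{eq:DPPJanossy}). First I would invoke the general result of \cite{Georgii}: under the absolute continuity hypothesis $C_{\mu} \ll \nu \otimes \mu$, the density $\frac{dC_{\mu}}{d(\nu \otimes \mu)}(x,\xi)$ exists and coincides, $\nu \otimes \mu$-almost everywhere, with the ratio of Janossy densities $j_\Lambda(\xi \cup x)/j_\Lambda(\xi)$ appearing in Definition~\ref{def:PCI}. This is precisely the step that consumes the assumption $C_{\mu} \ll \nu \otimes \mu$, and it legitimises treating $c_\Lambda(\xi,x)$ as a genuine ratio of Janossy densities rather than a merely formal quotient.

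Second, I would substitute the DPP-specific Janossy density (\ref{eq:DPPJanossy}), namely $j_\Lambda(\eta) = \mathrm{det}(I - \mathcal{K}_\Lambda)\,\mathrm{det}J_\Lambda(\eta)$, into both numerator and denominator. The crucial observation is that the Fredholm determinant $\mathrm{det}(I - \mathcal{K}_\Lambda)$ is a common factor independent of the configuration. Under Hypothesis (H1) the eigenvalues satisfy $\lambda_i^\Lambda \in [0,1[$, and since $\mathcal{K}_\Lambda$ is locally of trace class the product $\mathrm{det}(I-\mathcal{K}_\Lambda) = \prod_{i\ge 1}(1-\lambda_i^\Lambda)$ converges to a strictly positive value. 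Hence $\mathrm{det}(I - \mathcal{K}_\Lambda) \neq 0$ and it cancels cleanly from the quotient, leaving
\[ c_\Lambda(\xi,x) = \frac{\mathrm{det}(I-\mathcal{K}_\Lambda)\,\mathrm{det}J_\Lambda(\xi \cup x)}{\mathrm{det}(I-\mathcal{K}_\Lambda)\,\mathrm{det}J_\Lambda(\xi)} = \frac{\mathrm{det}J_\Lambda(\xi \cup x)}{\mathrm{det}J_\Lambda(\xi)}. \]

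Finally I would treat the degenerate case: when $\mathrm{det}J_\Lambda(\xi) = 0$ the denominator vanishes, the quotient is ill-defined, and the convention $c_\Lambda(\xi,x) = 0$ is imposed there, consistently with the corresponding convention on the Janossy density in Definition~\ref{def:PCI}. The main obstacle is not the algebra, which is immediate once the two ingredients are assembled, but rather the careful justification of the Georgii identification: one must check that $C_{\mu} \ll \nu \otimes \mu$ genuinely holds for the DPP under consideration (resting on the local trace-class property in (H1) and the finiteness of the Janossy densities), so that the Radon--Nikodym density exists and the ratio-of-Janossy-densities expression truly represents the Papangelou intensity rather than a formal symbol.
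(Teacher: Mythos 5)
Your proposal is correct and follows essentially the same route as the paper, which gives no explicit proof but states that the result follows from Definition~\ref{def:PCI} together with the Janossy density formula (\ref{eq:DPPJanossy}), citing \cite{Georgii}; your cancellation of the strictly positive Fredholm determinant $\mathrm{det}(I-\mathcal{K}_\Lambda)$ is exactly the intended derivation. The only superfluous point is your closing concern about verifying $C_{\mu} \ll \nu \otimes \mu$: the proposition takes this as a hypothesis, so nothing further needs to be checked there.
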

Since $J_\Lambda(\cdot,\cdot)$ is a positive semi-definite matrix, which can be written in the form of:
\begin{displaymath}
J_\Lambda(\xi x,\xi x) =
\begin{pmatrix}
A_{\xi,\xi} & A_{\xi,x}\\
A_{x, \xi}  & A_{x,x}
\end{pmatrix},\quad \mbox{ where } A_{x,\xi} = A_{\xi,x}^\dagger.
\end{displaymath}

Suppose that $\mathrm{det}A_{\xi,\xi}\neq 0$, then
\begin{displaymath}
\begin{pmatrix}
A_{\xi,\xi} & A_{\xi,x}\\
A_{x, \xi}  & A_{x,x}
\end{pmatrix}=
\begin{pmatrix}A_{\xi,\xi} & 0\\ A_{x, \xi}& I\end{pmatrix} \begin{pmatrix}I& (A_{\xi,\xi})^{-1} A_{\xi,x}\\ 0& A_{x,x} - A_{x, \xi} (A_{\xi,\xi})^{-1} A_{\xi,x}\end{pmatrix}
\end{displaymath}
and
\begin{equation*}
\mathrm{det}(J_\Lambda(\xi x,\xi x)) = \mathrm{det}(A_{\xi,\xi})\mathrm{det}( A_{x,x}-A_{x, \xi}(A_{\xi,\xi})^{-1}A_{\xi,x}).
\end{equation*}

From Hypothesis \textbf{(H1)}, we know:
\begin{equation*}
\mathrm{det}(A_{\xi,x}^\dagger(A_{\xi,\xi})^{-1}A_{\xi,x})\ge 0
\end{equation*}
and the following Lemma~\ref{lemma:CPIupperbound} follows.

\begin{lemma}\label{lemma:CPIupperbound} Papangelou intensity of a DPP is upper bounded:
\begin{equation}
\max_{\xi \in \mathcal{X},x \in \Lambda} c_\Lambda(\xi,x) = \max_{\xi \in \mathcal{X},x \in \Lambda}\frac{\mathrm{det}(J_\Lambda(\xi \cup x))}{\mathrm{det}(A_{\xi,\xi})} \le \max_{x \in \Lambda}J_\Lambda(x,x),
\end{equation}
and for a stationary model DPP:
\begin{equation*}
\max_{\xi \in \mathcal{X},x \in \Lambda} c(\xi,x) \le H.
\end{equation*}
where a stationary model DPP is a DPP with kernel of the form: $\mathbb{K}(x,y)=\mathbb{K}_0(x-y)$.
Thus, its $J_\Lambda(x,x)$ is equal to a constant $H$, for all $x \in \Lambda$.
\end{lemma}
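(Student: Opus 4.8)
The plan is to identify the Papangelou intensity with a Schur complement and then invoke positive semi-definiteness of $J_\Lambda$. Starting from the determinant identity already derived just above the lemma,
\[
\mathrm{det}(J_\Lambda(\xi x,\xi x)) = \mathrm{det}(A_{\xi,\xi})\,\mathrm{det}\bigl(A_{x,x}-A_{x,\xi}(A_{\xi,\xi})^{-1}A_{\xi,x}\bigr),
\]
and observing that $\mathrm{det}J_\Lambda(\xi) = \mathrm{det}(A_{\xi,\xi})$ since $A_{\xi,\xi}$ is exactly the block of $J_\Lambda(\xi x,\xi x)$ indexed by the points of $\xi$, I would divide through to get, on the event $\mathrm{det}(A_{\xi,\xi}) \neq 0$,
\[
c_\Lambda(\xi,x) = \mathrm{det}\bigl(A_{x,x}-A_{x,\xi}(A_{\xi,\xi})^{-1}A_{\xi,x}\bigr).
\]
Since $x$ is a single point, $A_{x,x} = J_\Lambda(x,x)$ is a scalar and $A_{x,\xi}(A_{\xi,\xi})^{-1}A_{\xi,x} = A_{\xi,x}^\dagger (A_{\xi,\xi})^{-1}A_{\xi,x}$ is a $1\times 1$ matrix, so the outer determinant is superfluous and
\[
c_\Lambda(\xi,x) = J_\Lambda(x,x) - A_{\xi,x}^\dagger (A_{\xi,\xi})^{-1}A_{\xi,x}.
\]

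Next I would show the subtracted term is non-negative. Because $J_\Lambda$ is positive semi-definite by Hypothesis \textbf{(H1)} (its spectral representation (\ref{eq:Jmatrix}) has non-negative coefficients $\lambda_i^\Lambda/(1-\lambda_i^\Lambda)$ as $\lambda_i^\Lambda\in[0,1[$), every principal submatrix $A_{\xi,\xi}$ is positive semi-definite; on the event $\mathrm{det}(A_{\xi,\xi}) \neq 0$ it is in fact positive definite, hence so is its inverse. Consequently the quadratic form $A_{\xi,x}^\dagger (A_{\xi,\xi})^{-1}A_{\xi,x} \geq 0$, which is precisely the inequality $\mathrm{det}(A_{\xi,x}^\dagger(A_{\xi,\xi})^{-1}A_{\xi,x})\ge 0$ recorded just before the lemma. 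It follows that $c_\Lambda(\xi,x) \le J_\Lambda(x,x)$, and taking the maximum over $\xi \in \mathcal{X}$ and $x \in \Lambda$ yields the first claimed bound.

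Finally, the stationary case is a direct specialization: if $\mathbb{K}(x,y) = \mathbb{K}_0(x-y)$, then the diagonal value $J_\Lambda(x,x)$ does not depend on $x$, say $J_\Lambda(x,x) = H$, and the first bound collapses to $\max_{\xi,x} c(\xi,x) \le H$. The only point requiring care — the main obstacle, though a mild one — is the degenerate case $\mathrm{det}(A_{\xi,\xi}) = 0$, where the Schur factorization and hence the scalar identity above are unavailable. There the convention $c_\Lambda(\xi,x) = 0$ from Definition~\ref{def:PCI} makes the inequality hold trivially, since $J_\Lambda(x,x) = \sum_{i\ge 1}\tfrac{\lambda_i^\Lambda}{1-\lambda_i^\Lambda}|\varphi_i^\Lambda(x)|^2 \ge 0$. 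I would therefore dispose of this degenerate case first and then run the Schur-complement argument on the complementary event.
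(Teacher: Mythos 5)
Your argument is essentially identical to the paper's: the paper proves the lemma by the same Schur-complement factorization of $\mathrm{det}(J_\Lambda(\xi x,\xi x))$ and the same observation that the quadratic form $A_{\xi,x}^\dagger(A_{\xi,\xi})^{-1}A_{\xi,x}$ is non-negative by positive semi-definiteness of $J_\Lambda$. Your only addition is the explicit treatment of the degenerate case $\mathrm{det}(A_{\xi,\xi})=0$ via the convention $c_\Lambda(\xi,x)=0$, which the paper leaves implicit; this is a harmless (indeed welcome) extra bit of care, not a different route.
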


\begin{remark}\label{remark:repulsive} From the above lemma, we have also shown that a DPP is a repulsive point process where its Papangelou intensity: $c(\xi,x) \le c(\eta,x)$ for $x \notin \eta \subseteq \xi$.
\end{remark}

\section{Simulation}\label{sec:simulation}
\subsection{Perfect Simulation via Dominating CFTP}
In this section, we describe the idea of using birth and death process to couple from the past of a continuous time Markov chain $X=\{ X_t: t\ge 0\}$ with values in $\mathcal{X}$.
From here, we restrict all the following point processes to be define on a Polish space $E$. 

A birth and death process $(X_t)_{t\ge 0}$ with birth rate $b$ and death rate $d$ is a homogenous Markovian process defined on \{$\mathbb{N} \cup {0}$\}.
Birth rate $b$ and death rate $d$ are non-negative functions defined on $\mathcal{X} \times E$.
The process $X_t$ is right-continuous and piecewise constant except at jump times $T_1<T_2<\dots$, where we define:
\begin{equation*}
B(\xi)=\int_E{b(\xi,x)d\nu(x)}, \quad \delta(\xi)=\sum_{x\in \xi}{d(\xi\backslash x, x)} \mbox{ for }\xi \neq\emptyset \mbox{, else }\delta(\emptyset)=0
\end{equation*}
and
\begin{equation*}
T_{m+1}-T_m \sim \mathrm{Exponential}(B(\xi)+\delta(\xi))
\end{equation*}

By conditioning on $T_{m+1}$, a birth occurs with probability:
\begin{equation*}
\frac{B(\xi)}{B(\xi)+\delta(\xi)}	
\end{equation*}
and a death with probability:
\begin{equation*}
\frac{\delta(\xi)}{B(\xi)+\delta(\xi)}.	
\end{equation*}

A $M/M/\infty$ queue is a birth and death process used to describe a multi-server queuing model, where its arrival rate is defined: $\lambda:=\int_E{b(\xi,x)d\nu(x)}$ and service rate is defined: $\mu:=d$.
As the total service rate of a $M/M/\infty$ queue is proportional to its size, the process is always stable.

A Markov Chain Monte Carlo method is to let the birth rate $b$ equals to the Papangelou intensity of the desired point process $X$
and the death rate $d=1$, then the Markov chain $\hat{X}=\{\hat{X}_t: t\in \mathbb{R}\}$ constructed converges to the distribution of $X$. 
To obtain a perfect simulation, Kendall and M{\o}ller \cite{KendallMoller(2000)} introduce the dominating process $\{D_t : t\in \mathbb{R}\}$ as a spatial birth and death process with death rate $d=1$ and birth rate $b=H$, where $H$ is the upper bound of the Papangelou intensity of the desired point process $X$. The intuition is to introduce a coupling (pairs of Markov chains, $L_{T_i}$ and $U_{T_i}$) from the past,
deriving from $D$ that respect a partial ordering on the state space $\mathrm{X}$ corresponding to $\hat{X}_t$ under time-evolution, i.e.:
\begin{eqnarray}
L_t \subseteq \hat{X}_t &\subseteq& U_t \subseteq D_t \qquad \forall s \le t \le 0 \label{eq:partialorder},\\
L_t = U_t  &\mbox{if}& L_s = U_s \qquad \forall s \le t \le 0 \label{eq:coalescentorder},
\end{eqnarray}

Next, we define a marking process $\{M_t : t\in \mathbb{R} \}$ which is independent from $D$ and such that we can adaptively contruct $\hat{X}$, $L$ and $U$ as functionals of $(D,M)$ .
We denote the process $\hat{X}^n$ as as the constructed Markov chain that begun at time $-n<0$. Suppose that we have coalescence between $L^n$ and $U^n$ during the progression from time $-n$ to $0$, 
then the exact equilibrium is attained by $\hat{X}^n$ at time $0$, where we have $L^n_0=\hat{X}^n_0=U^n_0$.
\begin{remark} Technically, we have to progressively increase $n$ if coalescence failed at time $0$, however, in \cite{ProppWilson(1996)} suggested that it is efficient to iteratively doubling $n$, and we let $n \in \{\frac{1}{2}, 1,2,4,\cdots\}$. In later section Section \ref{sec:RunTime}, we will show that the running time is bounded in limiting sense. 
Note that, $D$ is extended for each doubling of $n$, i.e. $D_{[-2n,0]}$ is computed from extension of $D_{[-n,0]}$.
\end{remark}

Following are the configuration of a perfect simulation. For each jump times $\{T_1<T_2<\dots\}$ in the dominating process $D$:\\
if there is a birth of point $x \in \Lambda \backslash D_{T_{i-1}}$ at time $T_{i}$, we set:
\begin{equation}\label{eq:configuration}
  \begin{array}{l l}
U_{T_{i}} := U_{T_{i-1}} \cup \{x\}&\mbox{if}\qquad M_{T_{i}} \le \frac{c(L_{T_{i-1}},x)}{H},\\
U_{T_{i}} := U_{T_{i-1}} &\mbox{otherwise.}\\
L_{T_{i}} := L_{T_{i-1}} \cup \{x\} &\mbox{if}\qquad M_{T_{i}} \le \frac{c(U_{T_{i-1}},x)}{H},\\
L_{T_{i}} := L_{T_{i-1}} &\mbox{otherwise.}
  \end{array}
\end{equation}
From Remark~\ref{remark:repulsive}, we know that $c(L_{T_{i-1}},x)\ge c(U_{T_{i-1}},x)$.\\
On the other hand, if there is a death of point $x \in D_{T_{i-1}}$ at time $T_{i}$, we configure:
$U_{T_{i}} := U_{T_{i-1}} \backslash \{x\}$ and $L_{T_{i}} := L_{T_{i-1}} \backslash \{x\}$ respectively.

Suppose that the process $(D, M)$ is stationary in time and $\hat{X}^n$, $L^n$ and $U^n$ for $\{n = 1, 2, 4,\dots\}$ are derived adaptively
from $(D,M)$ satisfying (\ref{eq:partialorder}) and (\ref{eq:coalescentorder}), then the following proposition follows immediately from the dominated convergence theorem, see \cite{KendallMoller(2000)}.

\begin{proposition}\label{prop:CFTP} Let $N = \inf \{n \in \{\frac{1}{2},1, 2,\cdots \} : L^n_0 = U^n_0\}$, and set $L^{n}_{-n} = \emptyset$ and $U^{n}_{-n} = D_{-n}$.
If as $t$ tends to infinity, $\hat{X}_t$ converges weakly to an equilibrium distribution $\pi$ and the probability of $D$ visiting $\emptyset$ in the time interval $[0, t]$ converges to $1$,
then almost surely $N < \infty$ and $L^N_0 = U^N_0$ follows the equilibrium distribution $\pi$.
\end{proposition}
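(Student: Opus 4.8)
The plan is to follow the standard dominated-CFTP argument of Kendall and M\o{}ller, exploiting the two structural properties (\ref{eq:partialorder}) and (\ref{eq:coalescentorder}) that the construction (\ref{eq:configuration}) already guarantees, together with the recurrence of the dominating process $D$. I would split the claim into two parts: first that $N<\infty$ almost surely, and then that the common coalesced value $L^N_0=U^N_0$ is distributed according to $\pi$.

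For the finiteness of $N$, I would first invoke the stationarity of $(D,M)$ to transfer the hypothesis ``the probability of $D$ visiting $\emptyset$ in $[0,t]$ tends to $1$'' into the time-reversed statement that the probability of $D$ visiting $\emptyset$ somewhere in $[-n,0]$ tends to $1$ as $n\to\infty$. The key observation is the sandwiching $L^n_t\subseteq U^n_t\subseteq D_t$ from (\ref{eq:partialorder}): if $D$ is empty at some time $-m\in[-n,0]$, then $L^n_{-m}=U^n_{-m}=\emptyset$, so coalescence has already occurred at $-m$, and by (\ref{eq:coalescentorder}) it persists, giving $L^n_0=U^n_0$. This yields the inclusion $\{D\text{ hits }\emptyset\text{ in }[-n,0]\}\subseteq\{N\le n\}$, whence $\mathrm{P}(N\le n)\to1$ and therefore $N<\infty$ almost surely.

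For the distributional identity I would use the funnelling property, namely that as $n$ increases the lower process $L^n_0$ increases and the upper process $U^n_0$ decreases; this follows from monotonicity of the updates in the initial data, since $L^n_{-n}=\emptyset\subseteq D_{-n}=U^n_{-n}$, so that once coalescence occurs at $n=N$ the output $Y:=L^N_0=U^N_0$ is unchanged for every $n\ge N$. Then I would couple to a chain $\hat X$ run in equilibrium: since $\hat X_t\subseteq D_t$ for all $t$, the equilibrium chain started at $-n$ is itself sandwiched, $L^n_0\subseteq\hat X_0\subseteq U^n_0$, so on the event $\{N\le n\}$ it is pinned to $Y$. Testing against an arbitrary bounded measurable $f$ and splitting on $\{N\le n\}$ and its complement, I would write $\mathrm{E}[f(Y)]=\mathrm{E}[f(\hat X_0)\mathbf{1}_{\{N\le n\}}]+\mathrm{E}[f(Y)\mathbf{1}_{\{N>n\}}]$ and let $n\to\infty$; since $\mathrm{P}(N>n)\to0$ both tail terms vanish and, as $\hat X_0\sim\pi$ by stationarity (the uniqueness of $\pi$ being supplied by the weak-convergence hypothesis), we obtain $\mathrm{E}[f(Y)]=\int f\,d\pi$. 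Since $f$ is arbitrary this identifies the law of $Y$ as $\pi$.

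The main obstacle I anticipate is justifying the monotone-coupling bookkeeping rigorously: that the updates (\ref{eq:configuration}) preserve the inclusion ordering both in $t$ (so the sandwich never crosses) and in $n$ (so coalescence is absorbing and the output stabilises), and that the equilibrium chain can be coupled to the same $(D,M)$ while remaining dominated by $D$. Most of this is encoded in the hypotheses (\ref{eq:partialorder})--(\ref{eq:coalescentorder}) and in the repulsiveness recorded in Remark~\ref{remark:repulsive}, which makes $c(U_{T_{i-1}},x)\le c(L_{T_{i-1}},x)$ so that a birth accepted into $L$ is automatically accepted into $U$; the remaining work is to check that these local monotonicities integrate to the global funnelling property rather than to produce any genuinely new estimate.
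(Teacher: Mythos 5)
The paper does not actually prove Proposition~\ref{prop:CFTP}: it asserts that the result ``follows immediately'' and defers to \cite{KendallMoller(2000)}, so there is no in-paper argument to compare against line by line. What you have written is, in outline, a correct reconstruction of the Kendall--M{\o}ller dominated-CFTP argument that the citation stands in for. Your first half is exactly right: stationarity of $(D,M)$ turns the hypothesis on $D$ visiting $\emptyset$ in $[0,t]$ into the backward statement on $[-n,0]$, the sandwich (\ref{eq:partialorder}) forces $L^n_{-m}=U^n_{-m}=\emptyset$ at any emptying time $-m$, and (\ref{eq:coalescentorder}) propagates coalescence to time $0$, giving $\mathrm{P}(N\le n)\to 1$. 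Your funnelling claim is also correct and, as you note, reduces to the cross-over monotonicity of (\ref{eq:configuration}) under the repulsiveness of Remark~\ref{remark:repulsive} (a birth accepted by the rule using the larger configuration is accepted by the rule using the smaller one), so the coalesced value is absorbing in $n$.

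The one step you should repair is the coupling to ``a chain $\hat X$ run in equilibrium.'' The existence of a stationary version of $\hat X$ defined as a functional of $(D,M)$ and dominated by $D$ for all time is not among the hypotheses and is itself the kind of statement one usually obtains only as a corollary of the construction; invoking it here is close to circular. The hypothesis the proposition actually supplies is weak convergence of $\hat X_t$ to $\pi$, and the argument it is tailored for uses the finite-horizon chain $\hat X^n$ started at time $-n$ (from $\emptyset$, say), which is sandwiched $L^n_0\subseteq\hat X^n_0\subseteq U^n_0$ by (\ref{eq:partialorder}) by construction. Then for bounded \emph{continuous} $f$ (not arbitrary bounded measurable $f$, since you only have weak convergence) you write $\mathrm{E}[f(Y)]-\mathrm{E}[f(\hat X^n_0)]$ as a difference of two terms supported on $\{N>n\}$, bounded by $2\|f\|_\infty\,\mathrm{P}(N>n)\to 0$, while $\mathrm{E}[f(\hat X^n_0)]\to\int f\,d\pi$; this identifies the law of $Y$ without ever needing a stationary coupled version. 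With that substitution your proof is complete and is precisely the argument the paper outsources to \cite{KendallMoller(2000)}.
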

\begin{figure}[htbp] 
        \subfigure[at time $T_0$]{%
            \label{fig:SimulateT_N}
            \includegraphics[width=0.31\textwidth,trim=70 20 65 20,clip]{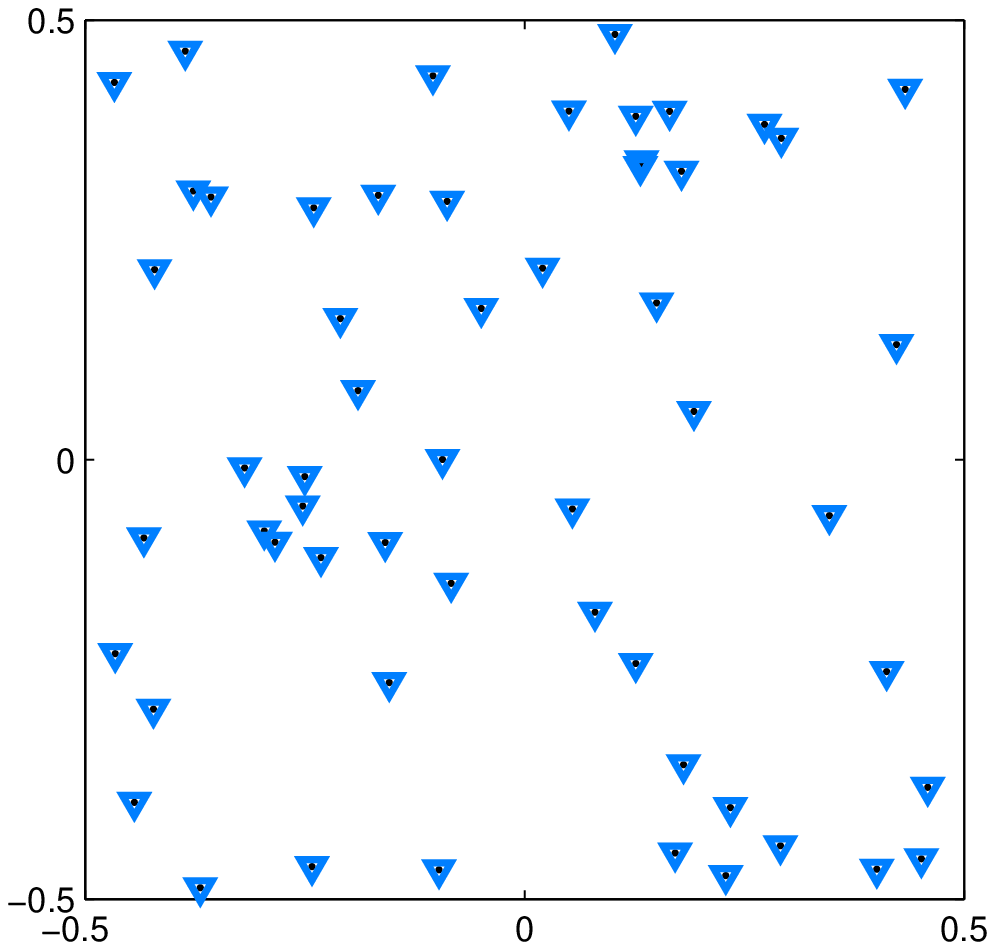}
        }%
        \subfigure[at time $T_{25}$]{%
           \label{fig:SimulateT_N+25}
           \includegraphics[width=0.31\textwidth,trim=70 20 65 20,clip]{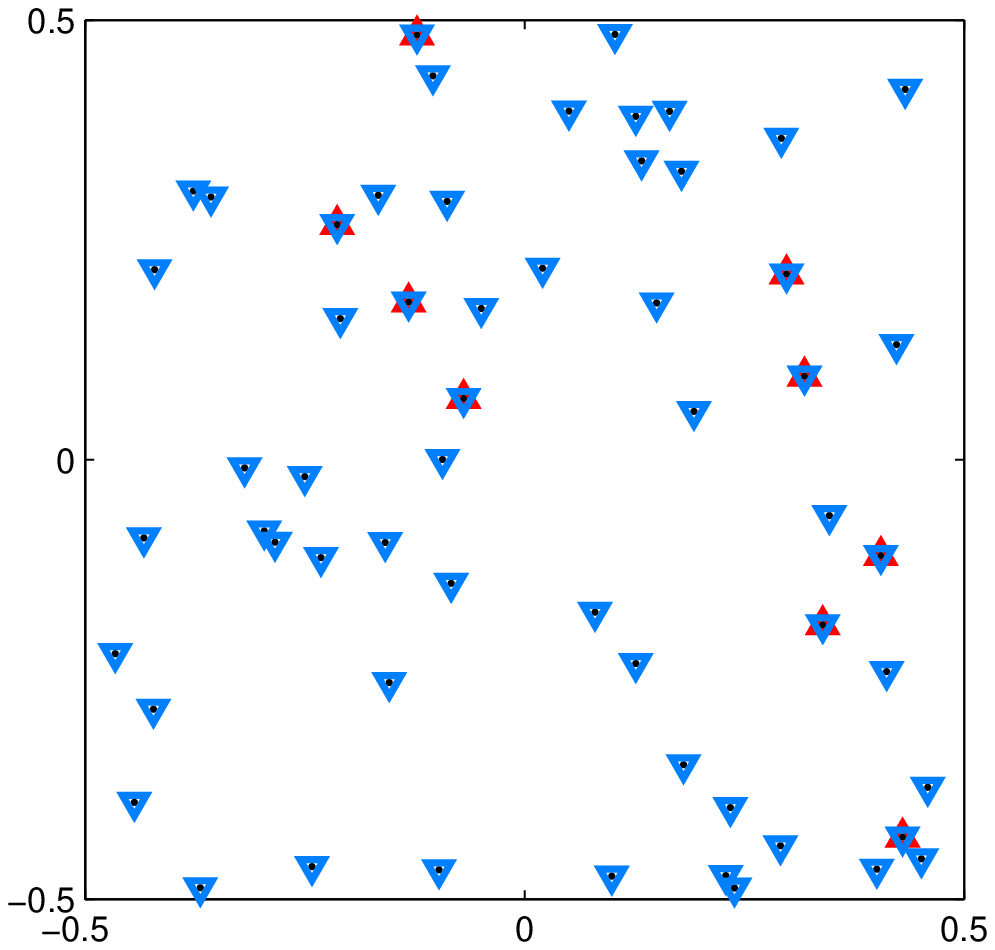}
        }%
        \subfigure[at time $T_{50}$]{%
            \label{fig:SimulateT_N+50}
           \includegraphics[width=0.31\textwidth,trim=70 20 65 20,clip]{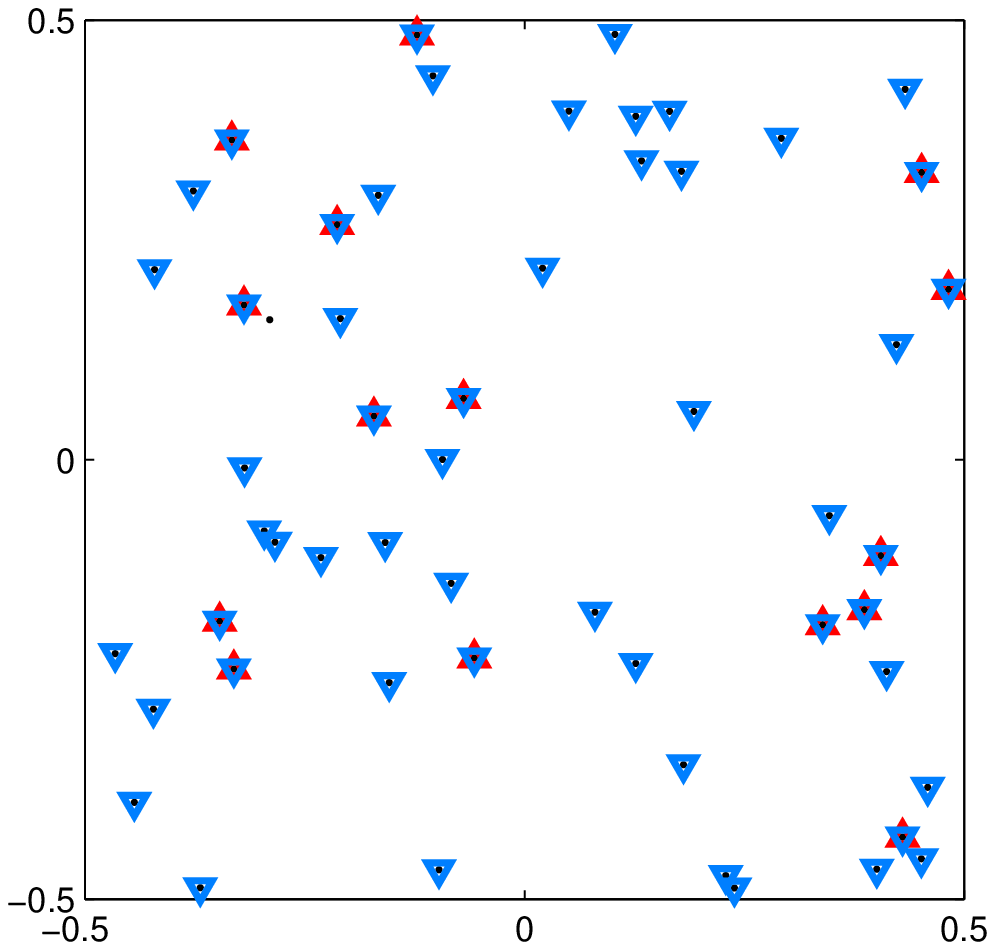}
        }%
				\caption{CFTP simulations for Gaussian model DPP with $\rho=50$ and $\alpha=0.04$, respectively at time $T_i$, the $i$-th jump time from time $t=-n$.
				Notations: $``\cdot":=D_t$, $``\nabla":=U_t$ and $``\Delta" \mbox{(red)}:=L_t$}
   \label{fig:Simulate}
\end{figure}

\subsection{Simulation of determinantal point processes} In this subsection, we generalize the idea of Kendall and M{\o}ller \cite{KendallMoller(2000)} by relaxing the condition of compactness.
\begin{theorem}\label{theorem:CFTP} Given a DPP $X$ defined on a Polish space $E$ w.r.t. a Radon measure $\nu$. Suppose that the process $D$ started with a Poisson point process (PPP) with intensity measure:
\begin{equation}\label{eq:arrivalrate}
\int_{E}{J(x,x) d\nu(x)}
\end{equation}
and with birth rate $b=J(x,x)d\nu(x)$ and death rate $d=1$. Suppose further, the marking process $M_{T_i} \sim \mbox{Unif}(0,1)$, i.i.d. for each $i$ and independent of $D$.
Then the process $(D, M)$ is stationary in time and almost surely $N<\infty$ and $L^N_0 = U^N_0$ follows the distribution of DPP $X$.
\end{theorem}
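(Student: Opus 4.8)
The plan is to reduce the claim to Proposition~\ref{prop:CFTP} by verifying its three hypotheses for the specific choices of dominating process, birth/death rates, and marking prescribed in the statement; the conclusion that $N<\infty$ almost surely and that $L^N_0=U^N_0$ follows the DPP law then drops out directly, and the stationarity of $(D,M)$ will be established along the way. Before checking the hypotheses I would record the finiteness that underlies everything: by \eqref{eq:Jmatrix} the diagonal integrates to $\int_E J(x,x)\,d\nu(x)=\mathrm{Tr}\,J=\sum_{i\ge1}\frac{\lambda_i^\Lambda}{1-\lambda_i^\Lambda}$, and under Hypothesis~(H1) (locally trace class with $\|\mathcal K\|<1$) this is dominated by $(1-\|\mathcal K\|)^{-1}\sum_i\lambda_i<\infty$. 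Hence the total birth rate of $D$ is finite, so $D$ is a genuine finite-activity spatial $M/M/\infty$ queue even though $E$ is only Polish and not compact. This is exactly the point at which the compactness assumption of Kendall--M\o ller is relaxed.

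Next I would argue that $(D,M)$ is stationary. With constant unit death rate and birth-intensity density $J(\cdot,\cdot)$ on the diagonal, $D$ is a spatial $M/M/\infty$ queue whose unique invariant law is the Poisson point process with intensity $J(x,x)\,d\nu(x)$ --- precisely the law from which $D$ is initialised. Since $D$ is time-homogeneous and started in equilibrium it is strictly stationary, and adjoining the i.i.d.\ $\mathrm{Unif}(0,1)$ marks, independent of $D$, preserves stationarity of the pair $(D,M)$.

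It then remains to verify the two analytic hypotheses of Proposition~\ref{prop:CFTP}. For the weak convergence $\hat X_t\Rightarrow\pi$, the chain $\hat X$ is the spatial birth--death process with birth rate $c_\Lambda(\xi,x)$ and unit death rate; a standard detailed-balance computation using the Papangelou identity $c_\Lambda(\xi,x)=j_\Lambda(\xi\cup x)/j_\Lambda(\xi)$ from \eqref{eq:DPPJanossy} shows that the DPP is reversible for this dynamics, so $\pi$ is the DPP law and $\hat X_t$ converges weakly to it. For the recurrence of $\emptyset$, the total service rate of the dominating queue grows with its size, so the $M/M/\infty$ queue is positive recurrent and visits the empty configuration infinitely often, whence the probability that $D$ hits $\emptyset$ in $[0,t]$ tends to $1$ as $t\to\infty$. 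The domination and funnelling relations \eqref{eq:partialorder}--\eqref{eq:coalescentorder} hold by the construction \eqref{eq:configuration}: the bound $c(\xi,x)\le J(x,x)$ from Lemma~\ref{lemma:CPIupperbound} keeps the acceptance probabilities in $[0,1]$, and the repulsiveness $c(L,x)\ge c(U,x)$ from Remark~\ref{remark:repulsive} preserves $L_t\subseteq U_t$.

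The main obstacle I anticipate is the interaction of the second and third steps under non-compactness: one must be confident that the finiteness $\mathrm{Tr}\,J<\infty$ genuinely makes $D$ a well-defined ergodic Markov process on $\mathcal X$ with the asserted Poisson equilibrium (avoiding the infinite-activity pathologies that a non-compact $E$ could otherwise introduce), and that the coupling functional of $(D,M)$ transmits the correct DPP equilibrium to $\hat X$. Once these are secured, the remaining checks are routine and feed directly into Proposition~\ref{prop:CFTP} to yield $N<\infty$ and $L^N_0=U^N_0\sim X$.
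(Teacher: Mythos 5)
Your proposal is correct and follows essentially the same route as the paper: both reduce the theorem to Proposition~\ref{prop:CFTP} by observing that $D$ is started in its Poisson equilibrium (hence $(D,M)$ is stationary) and that $D$ visits $\emptyset$ with probability tending to one. Your write-up is in fact more complete than the paper's two-line proof, which never explicitly verifies the weak convergence $\hat X_t \Rightarrow \pi$ or the ordering relations \eqref{eq:partialorder}--\eqref{eq:coalescentorder}; the only caveat is that your appeal to Hypothesis (H1) to get $\int_E J(x,x)\,d\nu(x)<\infty$ really requires global (not merely local) trace-class summability of $\mathcal{K}$ on the non-compact $E$, an assumption the paper likewise leaves implicit.
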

\begin{proof}
Stationarity of the process $(D, M)$ follows immediately as the process $D$ started with its equilibrium distribution, i.e. a PPP($J(x,x) d\nu(x)$).

Following, the probability of $D$ visiting $\emptyset$ in the time interval $[0, t]$ converges to $1$ as $t$ tends to infinity. Consequently, from Proposition~\ref{prop:CFTP}
we have $L^N_0 = U^N_0$ follows the distribution of DPP $X$.
\end{proof}

\begin{algorithm}
\caption{Simulation of determinantal point process}
\label{alg:DPP}
\begin{algorithmic}
\STATE \textbf{Sample} $D_0$ from PPP ($\int_{E}{J(x,x) d\nu(x)}$)
\STATE $n \leftarrow 1/2$;
\WHILE{TRUE}
\STATE $D \leftarrow$ BackwardExtend$(D, n)$;
\STATE $[L, U] \leftarrow$ Coupling$(D)$;
\IF{$L_0 == U_0$}
\RETURN $L_0$
\ELSE
\STATE $n \leftarrow n*2$;
\ENDIF
\ENDWHILE
\end{algorithmic}
\end{algorithm}

\begin{algorithm}
\caption{BackwardExtend$(D, n)$}
\label{alg:BackwardExtend}
\begin{algorithmic}
\STATE $j\leftarrow 0;$
\STATE $T(0)\leftarrow n/2;$ \qquad \qquad \COMMENT{$T(0)\leftarrow 0$ if $n=1/2$}
\STATE $\tilde{D}_{T(0)}\leftarrow D_{-n/2};$ \qquad \COMMENT{$\tilde{D}_{T(0)} \leftarrow D_0$ if $n=1/2$}
\WHILE{$T(j) \le n$}
\STATE $T(j+1) \leftarrow T(j) - \log(\mbox{Uniform}(0,1))/(\int_{E}{J(x,x) d\nu(x)}+|\tilde{D}_{T(j)}|)$;
    \IF{$\mbox{Uniform}(0,1) \le (\int_{E}{J(x,x) d\nu(x)})/(\int_{E}{J(x,x) d\nu(x)}+|\tilde{D}_{T(j)}|)$} 
		\STATE	$x \leftarrow $ uniform random point in $\Lambda \backslash \tilde{D}_{T(j)}$;
    \STATE  $\tilde{D}_{T(j+1)} \leftarrow \tilde{D}_{T(j)} \cup x$;
    \ELSE
    \STATE  $x \leftarrow $ uniform random point in $\tilde{D}_{T(j)}$;
    \STATE  $\tilde{D}_{T(j+1)} \leftarrow \tilde{D}_{T(j)}\backslash x$;
    \ENDIF
\STATE    $j\leftarrow j+1$;
\ENDWHILE
\STATE $D_{-t}\leftarrow\tilde{D}_t$ for all $t: n/2 < t\le n$
\RETURN $D$
\end{algorithmic}
\end{algorithm}

\begin{algorithm}
\caption{ \cite{KendallMoller(2000)} Coupling$(D)$}
\label{alg:Coupling}
\begin{algorithmic}
\STATE $L_{-n}\leftarrow \emptyset$;    
\STATE $U_{-n}\leftarrow D_{-n}$;
\FOR{$T_i \leftarrow$ each jump times $T_1<T_2<\cdots$ of $D$ in $]-n:0]$}
    \IF{$D_{T_i} \leftarrow D_{T_{i-1}} \cup x$}
       \STATE $u \leftarrow M_T$;
       \STATE $[L_{T_i},U_{T_i}] \leftarrow$ AddBirth$(L_{T_{i-1}},U_{T_{i-1}},x,u)$;
    \ELSE
       \STATE $x \leftarrow  D_{T_{i-1}} \backslash D_{T_i}$;
			 \STATE $L_{T_i} \leftarrow L_{T_{i-1}} \backslash x$;
			 \STATE $U_{T_i} \leftarrow U_{T_{i-1}} \backslash x$;
    \ENDIF
\ENDFOR
\RETURN $[L,U]$
\end{algorithmic}
\end{algorithm}

In the pseudocode, we begin with setting $n=1/2$ (in later Section~\ref{sec:RunTime}, we will show that we can replace $n=1/2$ with $\log{\int_{E}{J(x,x) d\nu(x)}}$) and construct $D$ backwards in time to $-n$ . Instead of reversing the birth and death rate, the process $D$ is a Glauber process with equilibrium measure $\int_{E}{J(x,x) d\nu(x)}$ and hence we can initiate with state $\tilde{D}_0=D_0=PPP(\int_{E}{J(x,x) d\nu(x)})$ and simulate $\tilde{D}_n$ forwards in time to time $n$ with $b=J(x,x) d\nu(x)$ and $d=1$. Let $D_{-t}=\tilde{D}_t$ for all $t: 0\le t\le n$. We initiate the processes
$L$ and $U$ from time $-n$ with $L_{-n}=\emptyset$ and $U_{-n}=D_{-n}$ respectively, such that (\ref{eq:partialorder}) is satisfied. The implementation of AddBirth$(L_{T_{i-1}},U_{T_{i-1}},x,u)$ in Algorithm~\ref{alg:Coupling} follows directly from (\ref{eq:configuration}) which we replace $H$ as $J(x,x) d\nu(x)$, hence we omit it here. In the algorithm, the coalescence of processes $L_t$ and $U_t$, and their convergence to the target distribution is assured by Theorem~\ref{theorem:CFTP}.

\section{Running time}\label{sec:RunTime}
In the previous sections, we have been concerned with the simulation of a DPP defined on a Polish space $E$. Following, we will provide a bound for the running time of the algorithm through the limiting behavior of a $M/M/\infty$ queue.

As per notations used in Section \ref{sec:simulation}, $|D_t|$ is also called as the $M/M/\infty$ queue with arrival rate: $\lambda=\int_{E}{J(x,x) d\nu(x)}$ and service rate: $\mu=1$.
We defined the process $G_t:= U_t \backslash L_t$ for all $t:\mathbb{R}$.

The coalescence time of $L$ and $U$ (equivalently the running time of the algorithm) is equal to the hitting time of $G_t$ to $\emptyset$.
From (\ref{eq:configuration}), a birth occur in $G_t$ if and only if there is a birth in $U_t$ but not in $L_t$ and a death occur in $G_t$ if and only if a
death occur in $D_t$ and the dead point $x$ is in $U_t$ but not in $L_t$. We obtain the arrival rate, $\lambda^G$ and service rate, $\mu^G$ of $G_t$ as following:
\begin{eqnarray}
\lambda^G \quad=& |E|(c(L_{t},x)-c(U_{t},x)) \label{eq:lambdaG}\\
\mu_t^G \quad=& \frac{|G_t|}{|U_t|} \label{eq:muM}
\end{eqnarray}

Suppose that the process $G_t$ initiated at time $t=t_0$ and we have a sufficiently large $|G_{t_0}|=z$, then after an exponential time, the process goes to size $z+1$ with probability
$\frac{\lambda^G}{\lambda^G + z\mu^G}$ and $z-1$ with probability $\frac{z\mu^G}{\lambda^G + z\mu^G}$. Hence, the next jump is very likely to be a death. We can roughly approximate the
order of hitting time by 
\begin{equation*}
\sum_{i=1}^{z}{\frac{1}{\lambda^G+i\mu^G}} \sim \frac{\log{z}}{\mu^G}
\end{equation*}

A rigorous approach is provided in \cite{Robert(2003)}, as the following Proposition~\ref{prop:HittingTime}.
\begin{proposition} \label{prop:HittingTime} Given a $M/M/\infty$ queue $G_t$ with arrival rate $\lambda$ and service rate $\mu$ (initiated at $t_0$, with $|M_{t_0}|=z$), the hitting time $T_\emptyset$ of $\emptyset$ is of the order $\log z$. Precisely: 
\begin{equation*}
\lim_{z\to\infty}\Pr\left(\left|\frac{T_\emptyset}{\log{z}}-\frac{1}{\mu}\right| \geq \varepsilon\right) = 0.
\end{equation*}
\end{proposition}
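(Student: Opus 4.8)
The plan is to reduce the hitting time to a sum of independent first-passage times across consecutive levels, to compute its mean exactly, to bound its variance uniformly in $z$, and then to conclude by Chebyshev's inequality. First I would exploit the nearest-neighbour (birth-and-death) structure of $G_t$: to descend from level $z$ to $\emptyset$ the chain must traverse $z-1,z-2,\dots,1$ in order, so by the strong Markov property applied at the successive hitting times one obtains the decomposition $T_\emptyset=\sum_{k=1}^{z}T_{k\to k-1}$, where $T_{k\to k-1}$ is the first-passage time from $k$ to $k-1$ and the summands are \emph{mutually independent} (each depends only on the excursion of the queue above level $k-1$ started from $k$, with birth rate $\lambda$ and death rate $k\mu$ at its base).

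Next I would compute the means $m_k:=\mathrm{E}[T_{k\to k-1}]$. Conditioning on the first jump out of state $k$ — a death with probability $k\mu/(\lambda+k\mu)$, or a birth forcing a return excursion through $k+1$ with probability $\lambda/(\lambda+k\mu)$ — yields the one-sided recursion
\begin{equation*}
m_k=\frac{1}{k\mu}+\frac{\lambda}{k\mu}\,m_{k+1},
\end{equation*}
whose bounded solution is $m_k=\mu^{-1}\sum_{j\ge 0}\frac{(\lambda/\mu)^{j}}{k(k+1)\cdots(k+j)}$. Since $(k+1)\cdots(k+j)\ge j!$, the tail is dominated by $k^{-1}e^{\lambda/\mu}$, so $m_k=\frac{1}{\mu k}+O(k^{-2})$. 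Summing,
\begin{equation*}
\mathrm{E}[T_\emptyset]=\frac{1}{\mu}\sum_{k=1}^{z}\frac{1}{k}+\sum_{k=1}^{z}O(k^{-2})=\frac{1}{\mu}\log z+O(1),
\end{equation*}
so that $\mathrm{E}[T_\emptyset]/\log z\to 1/\mu$.

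The crux is the variance. By independence, $\mathrm{Var}(T_\emptyset)=\sum_{k=1}^{z}\mathrm{Var}(T_{k\to k-1})$, and I would show each term is $O(k^{-2})$. Heuristically, from level $k$ the chain descends directly with probability $k\mu/(\lambda+k\mu)=1-O(1/k)$, so with high probability $T_{k\to k-1}$ equals a single holding time distributed as $\mathrm{Exp}(\lambda+k\mu)$, and the rare upward excursions contribute negligibly; rigorously one carries a second-moment recursion parallel to the one for $m_k$, or compares $T_{k\to k-1}$ stochastically with the holding time and the return excursions, to obtain $\mathrm{Var}(T_{k\to k-1})=O(k^{-2})$. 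Then $\mathrm{Var}(T_\emptyset)\le C\sum_{k\ge 1}k^{-2}<\infty$ is bounded \emph{uniformly in $z$}, whence for every $\varepsilon>0$,
\begin{equation*}
\Pr\!\left(\left|\frac{T_\emptyset-\mathrm{E}[T_\emptyset]}{\log z}\right|\ge\frac{\varepsilon}{2}\right)\le\frac{4\,\mathrm{Var}(T_\emptyset)}{\varepsilon^{2}(\log z)^{2}}\longrightarrow 0\qquad(z\to\infty).
\end{equation*}
Combining this with $\mathrm{E}[T_\emptyset]/\log z\to 1/\mu$ via the triangle inequality yields the asserted convergence in probability.

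I expect the variance bound to be the main obstacle. The mean obeys a clean linear recursion, but controlling $\mathrm{Var}(T_{k\to k-1})$ requires either propagating a second-order recursion through the excursion decomposition or a careful stochastic-domination argument, since one must verify that the rare but unboundedly high upward excursions do not inflate the fluctuations beyond $O(k^{-2})$; once that summability is secured, the remainder is routine.
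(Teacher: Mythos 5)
Your proof is correct in substance, and it is worth noting that the paper does not actually prove this proposition: it gives only the heuristic $\sum_{i=1}^{z}(\lambda+i\mu)^{-1}\sim\mu^{-1}\log z$ and defers the rigorous statement to Robert's monograph \cite{Robert(2003)}, where the result is obtained by quite different means (explicit martingales and transform computations for the $M/M/\infty$ queue). Your ladder decomposition $T_\emptyset=\sum_{k=1}^{z}T_{k\to k-1}$ into independent one-step descent times, the first-moment recursion $m_k=\tfrac{1}{k\mu}+\tfrac{\lambda}{k\mu}m_{k+1}$, the resulting $m_k=\tfrac{1}{\mu k}+O(k^{-2})$, and the uniform-in-$z$ variance bound followed by Chebyshev together give an elementary, self-contained proof; the variance claim is consistent with the pure-death case $\lambda=0$, where $\mathrm{Var}(T_{k\to k-1})=(k\mu)^{-2}$ exactly and the total variance converges (Gumbel limit), and the second-moment recursion you allude to does close: writing $s_k=\mathrm{E}[T_{k\to k-1}^2]$ and conditioning on the first jump yields $s_k=O(k^{-2})+\tfrac{\lambda}{k\mu}s_{k+1}$, whose bounded solution is $O(k^{-2})$. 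Two points deserve explicit treatment in a final write-up. First, the recursions propagate upward in $k$, so their general solutions contain a homogeneous part growing like $k!(\mu/\lambda)^k$; to identify $m_k$ (and $s_k$) with the bounded series solution you must first establish an a priori bound such as $\sup_k m_k<\infty$ (e.g.\ by positive recurrence of the queue, or by dominating the number of upward excursions above level $k$ by a geometric variable with success probability $k\mu/(\lambda+k\mu)$ and each excursion length by a subcritical branching-type bound). Second, the second-moment recursion should be written out rather than asserted, since that is where the "rare high excursions" worry is actually dispelled. With those two steps made explicit, your argument is complete and arguably more transparent than the cited one.
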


On the other hand, the hitting time of $G_t$ to $\emptyset$ is at least lower bounded by the time where all the initial points in $G_{t_0}$ perished, accodingly we have the following Proposition.
\begin{proposition}\label{prop:lowerbound} Given a $M/M/\infty$ queue $G_t$ with arrival rate $\lambda$ and service rate $\mu_t$ (initiated at $t_0$, with $|G_{t_0}|=z$ and $\mu_{t_0}=1$), the expected hitting time $\mathrm{E}[T_\emptyset]$ of $\emptyset$ is lower bounded:
\begin{equation}\label{eq:lowerbound}
\ln z \quad \le \quad \sum_{i=1}^{z}{\frac{1}{i}} \quad = \quad \mathrm{E}[T_\emptyset]
\end{equation}
\begin{proof}
Given $\mu_{t_0}=1$, each of the initial point in the configuration $G_{t_0}$ have living period i.i.d. $\mathrm{Exponential} (1)$.
It is not difficult to show that the expected value of the maximum of $z$ exponential random variable, $x_i$ with parameter $1$ is:
\begin{equation*}
\mathrm{E}[\max(x_i)] = \sum_{i=1}^{z}{\frac{1}{i}}  \qquad \mbox{ (harmonic series)}.
\end{equation*}
Hence, (\ref{eq:lowerbound}) is proved.
\end{proof}
\end{proposition}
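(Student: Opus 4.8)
The plan is to reduce the hitting time $T_\emptyset$ to the extinction time of the $z$ points present at initialization, and then to compute that extinction time explicitly. Write the initial configuration as $G_{t_0}=\{y_1,\dots,y_z\}$. The key first observation is a pathwise domination. Since the only deaths recorded in $G$ are inherited verbatim from deaths in the dominating process $D$, and since the death rate there is $d=1$ per point, each $y_j$ carries its own rate-one exponential clock, independent across $j$ and independent of the birth mechanism and the marking process $M$. Let $x_1,\dots,x_z$ denote these lifetimes, so that $x_j\sim\mathrm{Exponential}(1)$ i.i.d. As long as at least one $y_j$ survives we have $G_t\neq\emptyset$, and because births only enlarge the configuration they cannot cause an earlier emptying. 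Hence pathwise $T_\emptyset\ge\max_{1\le j\le z}x_j$, and in particular $\mathrm{E}[T_\emptyset]\ge\mathrm{E}[\max_j x_j]$.

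Next I would evaluate $\mathrm{E}[\max_j x_j]$. The cleanest route uses the memoryless property: the first of the $z$ clocks to ring is $\mathrm{Exponential}(z)$, after which, by the strong Markov property, the remaining $z-1$ clocks are again fresh i.i.d. $\mathrm{Exponential}(1)$ variables, so the next gap is $\mathrm{Exponential}(z-1)$, and so on down to $\mathrm{Exponential}(1)$. The extinction time is therefore a sum of independent exponentials with rates $z,z-1,\dots,1$, whence $\mathrm{E}[\max_j x_j]=\sum_{k=1}^{z}1/k$. Equivalently one may integrate the survival function $\Pr(\max_j x_j>t)=1-(1-e^{-t})^{z}$ and invoke the identity $\sum_{k=1}^{z}\binom{z}{k}(-1)^{k-1}/k=\sum_{k=1}^{z}1/k$.

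Finally, to obtain the stated logarithmic lower bound I would compare the harmonic sum with an integral, $\sum_{k=1}^{z}1/k\ge\int_{1}^{z+1}dx/x=\ln(z+1)\ge\ln z$. Chaining the three steps then yields $\ln z\le\sum_{k=1}^{z}1/k\le\mathrm{E}[T_\emptyset]$, which is the desired bound (I read the displayed ``$=$'' in the statement as the lower bound ``$\le$'', since ignoring births can only decrease the hitting time).

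I expect the only genuine obstacle to be the first step, namely justifying the pathwise inequality $T_\emptyset\ge\max_j x_j$ rigorously. The subtlety is that the nominal service rate $\mu_t=|G_t|/|U_t|$ is state-dependent and superficially couples the points; one must therefore read the dynamics off the underlying construction from $(D,M)$, where each particle of $D$ genuinely carries an independent unit-rate death clock and where the birth rule in \eqref{eq:configuration} never deletes an existing point. Once this bookkeeping is made precise the domination is immediate, and the two remaining computations are entirely standard.
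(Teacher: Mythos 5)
Your proposal is correct and follows essentially the same route as the paper: lower-bound the hitting time by the extinction time of the $z$ initial points, identify that extinction time as the maximum of $z$ i.i.d.\ $\mathrm{Exponential}(1)$ lifetimes, and evaluate its expectation as the harmonic sum $\sum_{i=1}^{z}1/i \ge \ln z$. You merely supply details the paper leaves implicit (the pathwise domination argument, the memoryless-property computation of the expected maximum, and the correct reading of the displayed ``$=$'' as a ``$\le$''), which is a faithful elaboration rather than a different proof.
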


From Propositions \ref{prop:HittingTime} and \ref{prop:lowerbound}, given $N = \inf \{n \in \{\frac{1}{2},1, 2,\cdots \} : L^n_0 = U^n_0\}$ and the size of the initial configuration of the process $G^N_{-N}$ equal to $z$, we have the following:
\begin{itemize}
	\item the expected hitting time is lower bounded by $\log{z}$.
	\item the hitting time is upper bounded by $z \log{z}$ where $\frac{1}{\mu_t^G} \le z$.
\end{itemize}

Since the size of the configuration for $G^N_{-N}$ is defined to be $|U^N_{-N} \backslash L^N_{-N}| = |D^N_{-N}|$, we now look into the limiting behavior of the re-normalized process of $D_t$.
We introduce a scaling procedure to the underlying space of the process $D_t$ by a factor of $W\in \mathbb{N}$. The scaled process is defined as:
\begin{equation*}
D^W_t := \frac{D_t}{W}	
\end{equation*}
and the underlying space of the scaled process is:
\begin{equation}
E_W: |E_{W}| = W|E|.
\end{equation}

The size of the initial configuration $D_{t_0}^W$ is deterministic, where $|D_{t_0}^W|$ is a non-negative integer such that:
\begin{equation*}
 \lim_{W\rightarrow\infty} \frac{|D_{t_0}^W|}{W} =: x \in \mathbb{R}^+ \cup \{0\}.
\end{equation*}

We have the following theorem from the functional law of large number in \cite{Robert(2003)}.
\begin{theorem}\label{theorem:functionalLLN} Given a $M/M/\infty$ queue $D_t$ with arrival rate $\lambda$ and service rate $\mu$ (initiated at $t=t_0$) and $T\in \mathbb{R}$, define:
\begin{equation}
f(t) := \frac{\lambda}{\mu}+(x-\frac{\lambda}{\mu}) e^{-\mu (t-t_0)},
\end{equation}
then as $W$ tends to infinity,
\begin{equation*}
\sup_{t_0 \le t \le T}  \frac{|D^W_t|}{W} \stackrel{d}{\longrightarrow} \sup_{t_0 \le t \le T} f(t)
\end{equation*}
\end{theorem}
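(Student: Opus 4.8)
The plan is to recognise Theorem~\ref{theorem:functionalLLN} as the standard fluid limit for a density-dependent Markov chain and to prove it by a martingale decomposition followed by Gronwall and the continuous mapping theorem. Write $Y^W_t := |D^W_t|/W$, where the count $|D^W_t|$ is the $M/M/\infty$ queue on the scaled space $E_W$ with arrival rate $W\lambda$ (indeed $\int_{E_W}J(x,x)\,d\nu(x) = W\int_E J(x,x)\,d\nu(x)$) and per-customer death rate $\mu$. Then $Y^W$ increases by $W^{-1}$ at rate $W\lambda$ and decreases by $W^{-1}$ at rate $\mu W Y^W_t$, so its generator acts on the identity by $\mathcal{A}^W\,\mathrm{id}(y) = W\lambda\cdot W^{-1} - \mu W y\cdot W^{-1} = \lambda - \mu y$. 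First I would record the Dynkin decomposition
$$Y^W_t = Y^W_{t_0} + \int_{t_0}^{t}(\lambda - \mu Y^W_s)\,ds + M^W_t,$$
with $M^W$ a local martingale whose predictable quadratic variation, read off from $\mathcal{A}^W$ applied to $g(y)=y^2$, is $\langle M^W\rangle_t = W^{-1}\int_{t_0}^{t}(\lambda + \mu Y^W_s)\,ds$.

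The second step is an a priori bound controlling the noise. Because deaths can only decrease the queue, on $[t_0,T]$ the process $Y^W_t$ is dominated by $Y^W_{t_0}$ plus $W^{-1}$ times a Poisson process of rate $W\lambda$, and $W^{-1}$ times that Poisson count converges to $\lambda(T-t_0)$; hence $\sup_{t_0\le t\le T}Y^W_t$ is bounded in probability by a constant $C$. Consequently $\mathrm{E}[\langle M^W\rangle_T]\le W^{-1}(\lambda+\mu C)(T-t_0)\to 0$, so by Doob's $L^2$-maximal inequality $\sup_{t_0\le t\le T}|M^W_t|\to 0$ in probability.

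Next I would identify $f$ as the unique solution of the integral equation $f(t) = x + \int_{t_0}^{t}(\lambda - \mu f(s))\,ds$, which is exactly the stated exponential. Subtracting the two integral equations and putting $\varepsilon^W := |Y^W_{t_0}-x| + \sup_{t_0\le t\le T}|M^W_t|$ gives
$$|Y^W_t - f(t)| \le \varepsilon^W + \mu\int_{t_0}^{t}|Y^W_s - f(s)|\,ds,$$
so Gronwall's inequality yields $\sup_{t_0\le t\le T}|Y^W_t - f(t)| \le \varepsilon^W e^{\mu(T-t_0)}$. Since $Y^W_{t_0}\to x$ by hypothesis and the martingale term vanishes, $\varepsilon^W\to 0$ in probability, which is the uniform convergence $Y^W\to f$ on $[t_0,T]$. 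Finally, the supremum map $g\mapsto\sup_{t_0\le t\le T}g(t)$ is $1$-Lipschitz for the uniform norm, so the continuous mapping theorem transfers this to $\sup_{t_0\le t\le T}Y^W_t\to\sup_{t_0\le t\le T}f(t)$; as the limit is deterministic, convergence in probability and in distribution coincide, which is the assertion.

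I expect the genuine obstacle to be the a priori boundedness used to kill the martingale term: one must prevent the pre-limit queue from escaping to large values before sending $W\to\infty$, which is the role of the Poisson domination above (a stopping-time localisation combined with a Gronwall estimate on $\mathrm{E}[(Y^W_{t\wedge\tau})^2]$ is an alternative). A minor point of care is the topology: on the Skorokhod space $D[t_0,T]$, convergence to the continuous limit $f$ in the $J_1$ sense is equivalent to uniform convergence, so the $\sup$ functional is continuous at $f$ and the last step is legitimate.
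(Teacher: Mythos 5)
Your proof is correct, but it is worth noting that the paper itself does not prove Theorem~\ref{theorem:functionalLLN} at all: it imports the statement wholesale from the cited book of Robert (2003), where the fluid limit of the $M/M/\infty$ queue is obtained via the explicit representation of the queue length as a functional of a marked Poisson point process (customers arrive according to a Poisson process and carry i.i.d.\ exponential service marks, so $|D^W_t|$ is a sum of indicators whose law of large numbers can be read off directly). Your route is the generic density-dependent-Markov-chain argument \`a la Kurtz: Dynkin decomposition $Y^W_t = Y^W_{t_0} + \int_{t_0}^t(\lambda-\mu Y^W_s)\,ds + M^W_t$, vanishing of the martingale part via the bracket $\langle M^W\rangle_t = W^{-1}\int_{t_0}^t(\lambda+\mu Y^W_s)\,ds$ and Doob, Gronwall against the integral equation solved by $f$, and the continuous mapping theorem for the supremum functional (correctly using that convergence in probability to the deterministic $\sup f$ implies the stated convergence in distribution). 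This buys generality --- the same argument covers any queue whose transition rates are affine in the scaled state --- at the price of the a priori bound you rightly single out as the crux; the representation-based proof gets uniform control for free from the Poisson structure. One small point of hygiene: you pass from ``$\sup_t Y^W_t$ is bounded in probability by $C$'' to $\mathrm{E}[\langle M^W\rangle_T]\le W^{-1}(\lambda+\mu C)(T-t_0)$, which is not a valid implication as written; but your own Poisson domination gives the deterministic bound $\mathrm{E}[Y^W_s]\le Y^W_{t_0}+\lambda(s-t_0)$ (the initial size is deterministic in the paper's setup), which suffices for the expectation of the bracket to vanish, so the argument survives with that rewording or with the stopping-time localisation you already propose.
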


\begin{proposition}\label{prop:SizeBound} Suppose given a $M/M/\infty$ queue $D_t$ defined on a $E$ with arrival rate $\lambda=\int_{E}{J(x,x) d\nu(x)}$ and service rate $\mu=1$.
Suppose further $D_t$ is initiated at $t=t_0$, with $D_{t_0}=\mathrm{PPP}(J(x,x) d\nu(x))$, then we have
\begin{equation}\label{eq:SizeBound}
\sup_{-t_0 \le t \le T}|D_t| = O\left(\max\left(x,\int_{E}{J(x,x) d\nu(x)}\right)\right),
\end{equation}
\begin{proof}
Given $\lambda=\int_{E}{J(x,x) d\nu(x)}$ and $\mu=1$, the equilibrium distribution of the process $D_t$ is $\mathrm{PPP}(J(x,x) d\nu(x))$, i.e.
\begin{equation*}
	|D_t|\sim \mathrm{Poisson}\left(\int_{E}{J(x,x) d\nu(x)}\right)
\end{equation*}
and with the scaling $W$, the scaled process $D_t^W$:
\begin{equation*}
	|D_t^W|\sim \mathrm{Poisson}\left(\int_{E_W}{J(x,x) d\nu(x)}\right)
\end{equation*}
Hence, the limiting sequence of $(\frac{|D^W_t|}{W})$ converges in distribution to $D_t$, i.e.:
\begin{equation*}
\frac{D^W_t}{W} \stackrel{d}{\longrightarrow} D_t.
\end{equation*}
From Theorem~\ref{theorem:functionalLLN}, as $W$ tends to infinity, we have:
\begin{eqnarray*}
\sup_{t_0 \le t \le T}\frac{|D^W_t|}{W} &\stackrel{d}{\longrightarrow}& \sup_{t_0 \le t \le T}{\int_{E}{J(x,x) d\nu(x)}+\left(x-\int_{E}{J(x,x) d\nu(x)}\right)e^{-\mu(t-t_0)}}\\
&=& \sup_{t_0 \le t \le T}{xe^{-\mu(t-t_0)}+\int_{E}{J(x,x) d\nu(x)}(1-e^{-\mu(t-t_0)})}\\
&=& O\left(\max\left(x,\int_{E}{J(x,x) d\nu(x)}\right)\right)
\end{eqnarray*}
Therefore (\ref{eq:SizeBound}) is proved
\end{proof}
\end{proposition}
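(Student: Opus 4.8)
The plan is to obtain the pathwise bound as a consequence of the deterministic fluid limit furnished by Theorem~\ref{theorem:functionalLLN}, once the hypotheses of that theorem are verified for the rescaled family $(D^W_t)_{W\ge 1}$. Write $\lambda := \int_E J(x,x)\,d\nu(x)$ and $\mu=1$ throughout. First I would record the equilibrium law of the queue: for an $M/M/\infty$ queue with arrival rate $\lambda$ and unit service rate the stationary size distribution is $\mathrm{Poisson}(\lambda)$, and since $D$ is initiated from $\mathrm{PPP}(J(x,x)\,d\nu(x))$ we have $|D_{t_0}|\sim\mathrm{Poisson}(\lambda)$ and the process is stationary, so $|D_t|\sim\mathrm{Poisson}(\lambda)$ for every $t$. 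This is the marginal information with which the fluid limit must be consistent.

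Second, I would set up the scaling and check the hypotheses. Blowing up the space to $E_W$ with $|E_W|=W|E|$ scales the arrival rate to $\int_{E_W}J\,d\nu = W\lambda$, so that $|D^W_t|\sim\mathrm{Poisson}(W\lambda)$ and $|D^W_t|/W$ has mean $\lambda$ and variance $\lambda/W\to 0$. Thus the rescaled marginals concentrate at $\lambda$, and the rescaled initial size $|D^W_{t_0}|/W$ converges to the constant demanded by the functional law of large numbers; this also pins down the regime in which the $O(\cdot)$ statement is to be read, namely that the configuration is large precisely when $\lambda$ is large.

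Third, I would invoke Theorem~\ref{theorem:functionalLLN}, which yields convergence in distribution of the rescaled path $t\mapsto |D^W_t|/W$, uniformly on the interval, to the solution $f(t)=\lambda+(x-\lambda)e^{-(t-t_0)}$ of the fluid equation $\dot f = \lambda - f$. Because $f$ interpolates monotonically between $f(t_0)=x$ and $\lim_{t\to\infty}f(t)=\lambda$, its supremum over any interval equals $\max(x,\lambda)$. Passing the supremum through the distributional limit then gives $\sup_t |D^W_t|/W \longrightarrow \max(x,\lambda)$, and undoing the scaling yields $\sup_{t_0\le t\le T}|D_t| = O(\max(x,\lambda))$, as claimed.

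The step I expect to be the main obstacle is this last interchange of the supremum with the distributional limit. The cleanest route is to realize the limit in $C[t_0,T]$, where the functional $g\mapsto \sup_t g(t)$ is $1$-Lipschitz for the uniform norm and the continuous mapping theorem applies immediately; alternatively, since the limit path $f$ is continuous, the supremum functional is continuous at $f$ in the Skorokhod $J_1$ topology on $D[t_0,T]$, which suffices. A secondary point worth reconciling is that the proposition simultaneously starts $D$ in equilibrium — which forces the rescaled initial size to concentrate at $\lambda$, i.e. $x=\lambda$ and $f\equiv\lambda$ — and carries a general $x$; the general-$x$ formulation is the one needed downstream for the difference process $G_t$, and the equilibrium case is recovered as the specialization $x=\lambda$, for which the bound reads $\sup_t|D_t|=O(\lambda)$.
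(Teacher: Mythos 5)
Your argument follows the same route as the paper's proof: note the Poisson equilibrium marginals, rescale, invoke the functional law of large numbers of Theorem~\ref{theorem:functionalLLN}, observe that the fluid limit $f(t)=\lambda+(x-\lambda)e^{-(t-t_0)}$ interpolates between $x$ and $\lambda$ so its supremum is $\max(x,\lambda)$, and undo the scaling. The one obstacle you single out --- interchanging the supremum with the distributional limit --- is already built into the statement of Theorem~\ref{theorem:functionalLLN} as the paper cites it (the theorem asserts convergence of the supremum directly), and your closing observation that starting $D$ in equilibrium forces $x=\lambda$, so that the general-$x$ formulation is really aimed at the downstream application, is a legitimate point that the paper's own proof passes over in silence.
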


If we fixed $x=\int_{E}{J(x,x) d\nu(x)}$, applying Proposition \ref{prop:SizeBound} to Proposition~\ref{prop:HittingTime}, we have the stopping time of the algorithm upper bounded by:
\begin{equation}\label{eq:uppebound}
	O\left(\int_{E}{J(x,x) d\nu(x)}\log{\int_{E}{J(x,x) d\nu(x)}}\right),
\end{equation}
and if for all $x\in E$, $J(x,x)=H$ a constant, i.e. a stationary model DPP, then the upper bound is:
\begin{equation*}
	O(H|E|\log{H|E|}).
\end{equation*}
Heuristically, we shall see that the running time of the algorithm is much lower than the bound (\ref{eq:uppebound}) in Section \ref{sec:App}.

\section{Application to examples in stationary models}\label{sec:App}
\begin{figure}[htbp] 
	\centering
        \subfigure[Gaussian Model]{%
            \label{fig:GaussianSample}
            \includegraphics[width=0.31\textwidth, trim=70 20 65 20,clip]{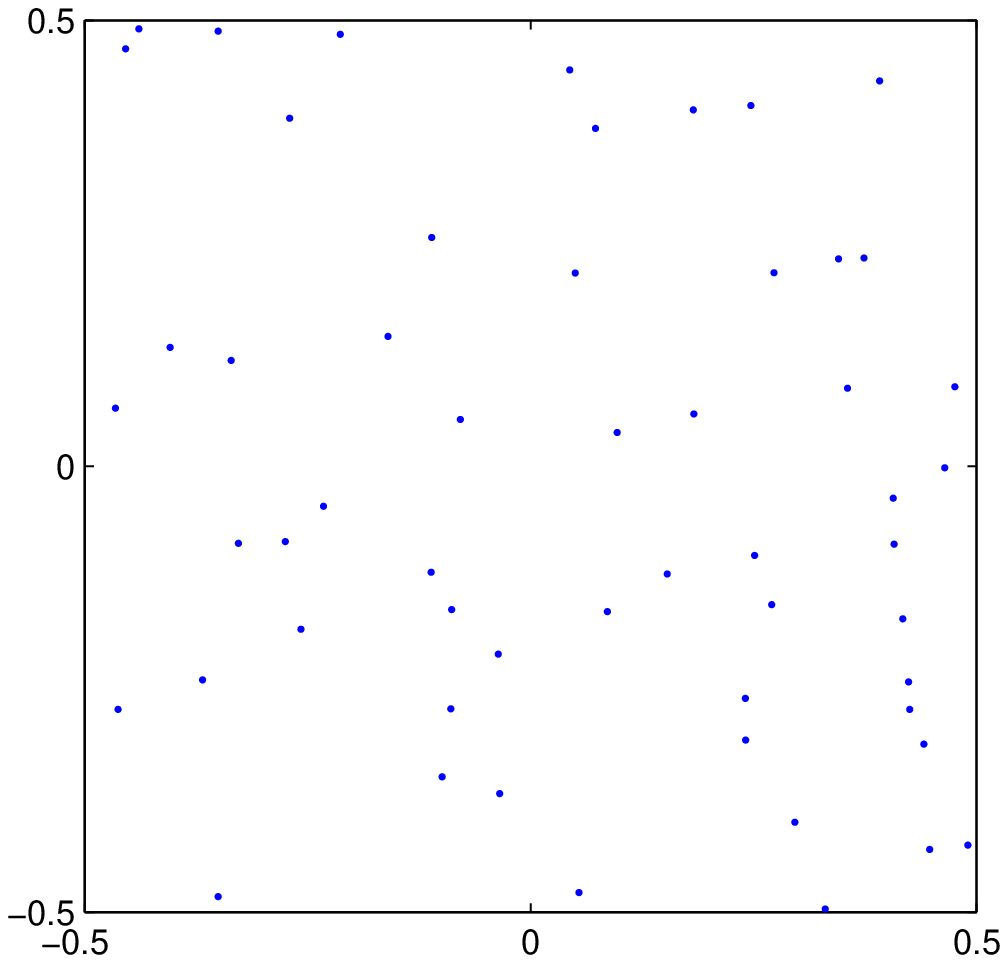}
        }%
        \subfigure[Mat{\'e}rn Model ($\nu=5$)]{%
           \label{fig:MaternSample}
           \includegraphics[width=0.31\textwidth, trim=70 20 65 20,clip]{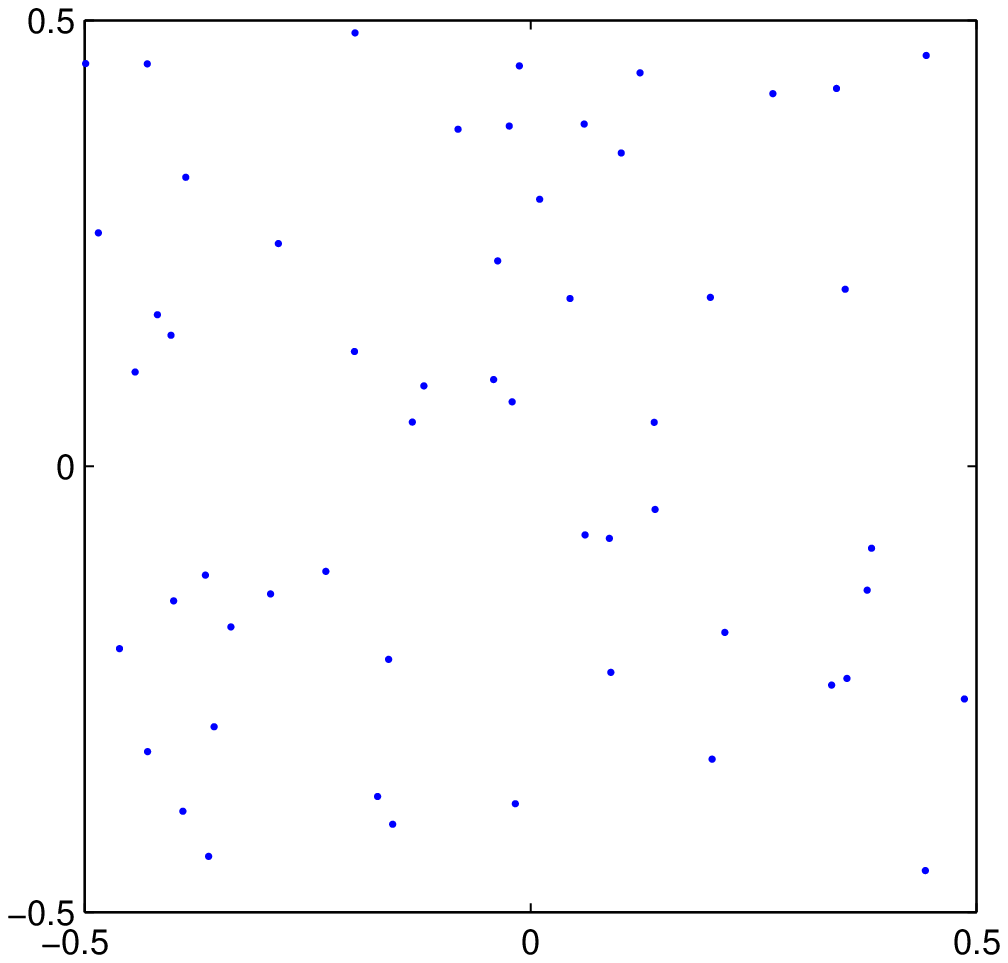}
        }%
        \subfigure[Cauchy model ($\nu=5$)]{%
            \label{fig:CauchySample}
           \includegraphics[width=0.31\textwidth,trim=70 20 65 20,clip]{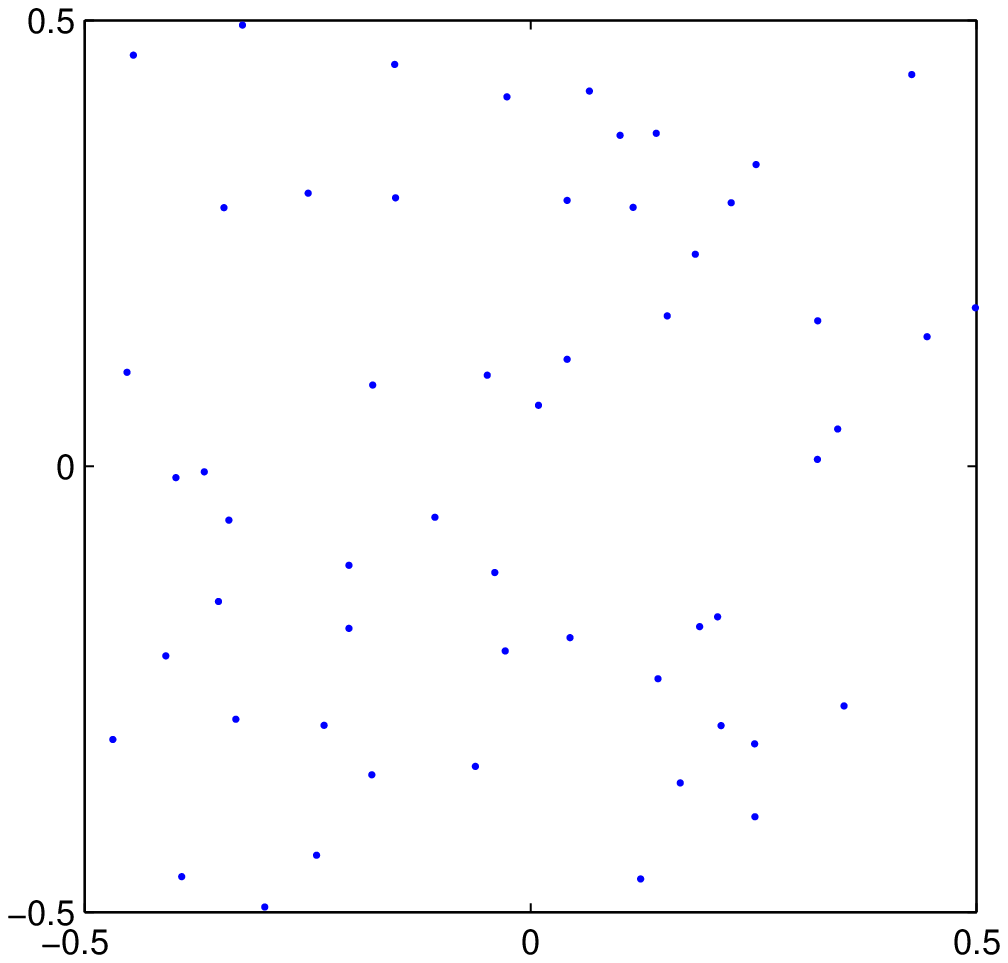}
        }%
				\caption{Configurations of stationary DPPs simulated from CFTP algorithm. $\rho=50$ and $\alpha = \alpha_{\max}/2$ for all the $3$ models.}
   \label{fig:Sample}
\end{figure}
Point processes and random measures that are invariant under shifts in a $d$-dimensional Euclidean space $\mathbb{R}^d$ play a vital role in applications and development of the general theories. Accordingly in this section, we will be focusing on stationary DPP models defined on a compact set $\Lambda=[-1/2,1/2]^2$ in $\mathbb{R}^2$, where we denote $l\in \Delta := \{x-y:x,y\in \Lambda\}$ and 
\begin{equation*}
	\mathbb{K}_0(l) := \mathbb{K}(x,y).
\end{equation*}
Suppose we are given a DPP kernel, it would be ideal if we could explicitly compute the Papangelou intensity (upper bound) in Lemma~\ref{lemma:CPIupperbound}, however (see \cite{Georgii} and \cite{LMR12}) there are only known in a few simple models where the Papangelou intensity can be computed from the spectral representation. In the sequel, we will approximate the kernel
in (\ref{eq:SpectralRepresentation}) by applying Fourier expansion to obtain the Papangelou intensity.

Consider the following orthonormal Fourier basis in $L^2(\Delta)$:
\begin{equation}
	\varphi_k^{\Delta}(l) = e^{2\pi i k \cdot (l)}, \quad k\in \mathbb{Z}^2, l \in \Delta
\end{equation}
where $k \cdot l$ is the dot product of the vectors. We have the Fourier expansion of $\mathbb{K}_0(l)$ as:
\begin{equation*}
	\mathbb{K}_0(l) = \sum_{k\in \mathbb{Z}^2}{\lambda_k^{\Delta} \varphi_k^{\Delta}(l)}.
\end{equation*}
equivalently:
\begin{equation*}
	\mathbb{K}(x,y) = \sum_{k\in \mathbb{Z}^2}{\lambda_k^{\Delta} \varphi_k^{\Delta}(x)\overline{\varphi_k^{\Delta}(y)}},
\end{equation*}
where the Fourier coefficients:
\begin{eqnarray*}
	\lambda_k^{\Delta} &=& \int_{\Delta}{\mathbb{K}_0(l)e^{-2\pi i k \cdot r} dl}\\
	&\approx& \int_{\mathbb{R}^2}{\mathbb{K}_0(r)e^{-2\pi i k \cdot r} dr}\quad =: \quad \lambda_k
\end{eqnarray*}
As $\mathbb{K}_0(l) \approx 0$ when $|l|>1$, we have Fourier transform as an approximation for the Fourier coefficient.

Corresponding to \cite{LMR12}, let $\rho$ defined the intensity of the DPP and we apply the CFTP simulations on the following three stationary models and their Fourier transform respectively:
\begin{enumerate}
	\item Gaussian model: for $\rho \ge 0$ and $0<\alpha \le \sqrt{\frac{1}{\pi \rho}}$ 
	\begin{eqnarray}
		\mathbb{K}(x,y) &=& \rho \exp\left(-\frac{1}{\alpha^2}\|x-y\|^2\right)\\
		\lambda_k &=& \pi\alpha^2\rho e^{-\pi^2\alpha^2\|k\|^2}
	\end{eqnarray}
	\item Mat{\'e}rn model: for $\rho \ge 0$, $\nu>0$ and $0<\alpha \le \sqrt{\frac{1}{4\pi \nu \rho}}$ 
	\begin{eqnarray}
		\mathbb{K}(x,y) &=& \rho \frac{2^{1-\nu}}{\Gamma(\nu)\alpha^{\nu}}\|x-y\|^\nu \textbf{K}_\nu(\frac{1}{\alpha}\|x-y\|)\label{eq:W-HKernel}\\
		\lambda_k &=& 4\pi\alpha^2\rho \frac{\nu}{(1+4\pi^2\alpha^2\|k\|^2)^{1+\nu}}
	\end{eqnarray}
	\item Cauchy model (generalized): for $\rho \ge 0$, $\nu>0$ and $0<\alpha \le \sqrt{\frac{\nu}{\pi \rho}}$ 
	\begin{eqnarray}
		\mathbb{K}(x,y) &=& \frac{\rho}{(1+\frac{1}{\alpha^2}\|x-y\|^2)^{1+\nu}}\\
		\lambda_k &=& \frac{2^{1-\nu}\pi\alpha^2\rho}{\Gamma(1+\nu)}\|2\pi\alpha k\|^{\nu}\textbf{K}_\nu(\|2\pi\alpha k\|)\label{eq:CauchyEigen}
	\end{eqnarray}
\end{enumerate}
where $\textbf{K}_\nu$ is the modified Bessel function of the second kind. The integral representation of the function is:
		\begin{equation*}
			\textbf{K}_\nu(z) = \frac{\left(\frac{z}{2}\right)^\nu\Gamma(\frac{1}{2})}{\Gamma(\nu+\frac{1}{2})}\int_1^\infty{e^{-zt}(t^2-1)^{\nu-\frac{1}{2}}dt}
		\end{equation*}
As $z$ tends to $0$, we have $\textbf{K}_\nu(z)$ tends to infinity and $\lim_{z\rightarrow 0} z^\nu\textbf{K}_\nu(z) = \frac{\Gamma(\nu)}{2^{1-\nu}}$.
In Mat{\'e}rn model, we have $\mathbb{K}(x,x)=\rho$, and in Cauchy model, we have $\lambda_0= \frac{\pi\alpha^2\rho}{\nu}$.
See other representations of $\textbf{K}_\nu(z)$ in \cite{Gradshteyn(2007)}. 

From Lemma~\ref{lemma:CPIupperbound} and (\ref{eq:Jmatrix}), we set the upper bound of the Papangelou intensity: 
\begin{equation}
	H = J_\Lambda(x,x) = \sum_{k\in \mathbb{Z}^2}{\frac{\lambda_k}{1-\lambda_k}}
\end{equation}

In practice we are unable to compute the infinite sum of the Fourier expansions. From (\ref{eq:DPPmeanVar}), a rule of thumb will be choosing a constant $N$ large enough such that that:
\begin{equation}
	\sum_{-N\le i,j\le N}{\lambda_{i,j}^{\Delta}}\approx \rho 
\end{equation}

For the given models, we compute their pair correlation function $g(r)$ to investigate the reliability of the simulations, where $r:=\|x-y\|^2$. Given any $2$ points $x$ and $y$ in a configuration, their
pair correlation function is given as:
\begin{equation*}
	g(r) := \frac{\rho_2(x,y)}{\rho_1(x)\rho_1(y)}= 1-\frac{\mathbb{K}(x,y)\mathbb{K}(y,x)}{\mathbb{K}(x,x)\mathbb{K}(y,y)}
\end{equation*}
and respectively for:
\begin{enumerate}
	\item Gaussian model:
	\begin{eqnarray}
		g(r)&=&1-e^{-2(r/\alpha)^2},
	\end{eqnarray}
	\item Mat{\'e}rn model:
	\begin{eqnarray}
		g(r)&=&1-[2^{1-\nu}(\frac{r}{\alpha})^\nu\textbf{K}_\nu\left(\frac{r}{\alpha}\right)/\Gamma(\nu)]^2,
	\end{eqnarray}
	\item Cauchy model:
	\begin{eqnarray}
		g(r)&=&1-(1+(r/\alpha)^2)^{-2\nu-2}.
	\end{eqnarray}
\end{enumerate}

In this paper, we fixed $N$ such that the sum is at least $99.9\%$ of $\rho$. 
In Figure~\ref{fig:PMF}, we compare the distribution of the size of DPP simulated with its \textit{actual} distribution given by a Poisson-binomial distribution, see \cite{Hough(2006)}.
Here, we use the term \textit{actual} for the truncated Fourier expansion to approximate the kernel. From \cite{Fernandez(2010)}, the probability mass function (PMF) of the
Poisson-binomial distribution can be written in the form of discrete Fourier transform as:
\begin{equation*}
	\Pr (|X|=n)=\frac{1}{N+1}\sum_{k=0}^{N}{{{e^{\frac{-2\pi i kn}{N+1}}}}\prod_{m=1}^{N}{\left( p_me^{\frac{2\pi i k}{N+1}}+(1-p_m) \right)}}
\end{equation*}

\begin{figure}[htbp] 
	\centering
        \includegraphics[width=0.5\textwidth]{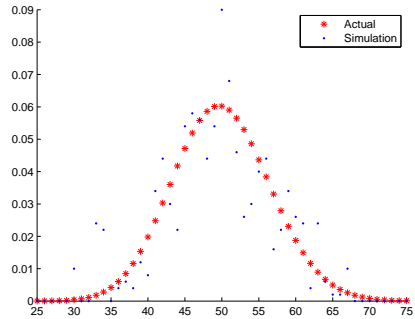}
				\caption{PMF of $500$ Gaussian models with $\rho=50$ and $\alpha =0.04$.}
   \label{fig:PMF}
\end{figure}

Figure \ref{fig:PCF} is a comparison of the theoretical pair correlation function $g(r)$ with the simulated DPP results for $\rho=50$ and $\alpha$ is fixed to be $\alpha_{\max}/2$.
For both Mat{\'e}rn Model and Cauchy Model, $\nu=5$. With regards to the the error in $g(r)$ when $r$ tends to $0$, we lost some `harmonic' in the Fourier approximations when computing the Papangelou intensity.

\begin{figure}[htbp] 
	\centering
        \subfigure[Gaussian Model]{%
            \label{fig:GaussianPCF}
            \includegraphics[width=0.48\textwidth]{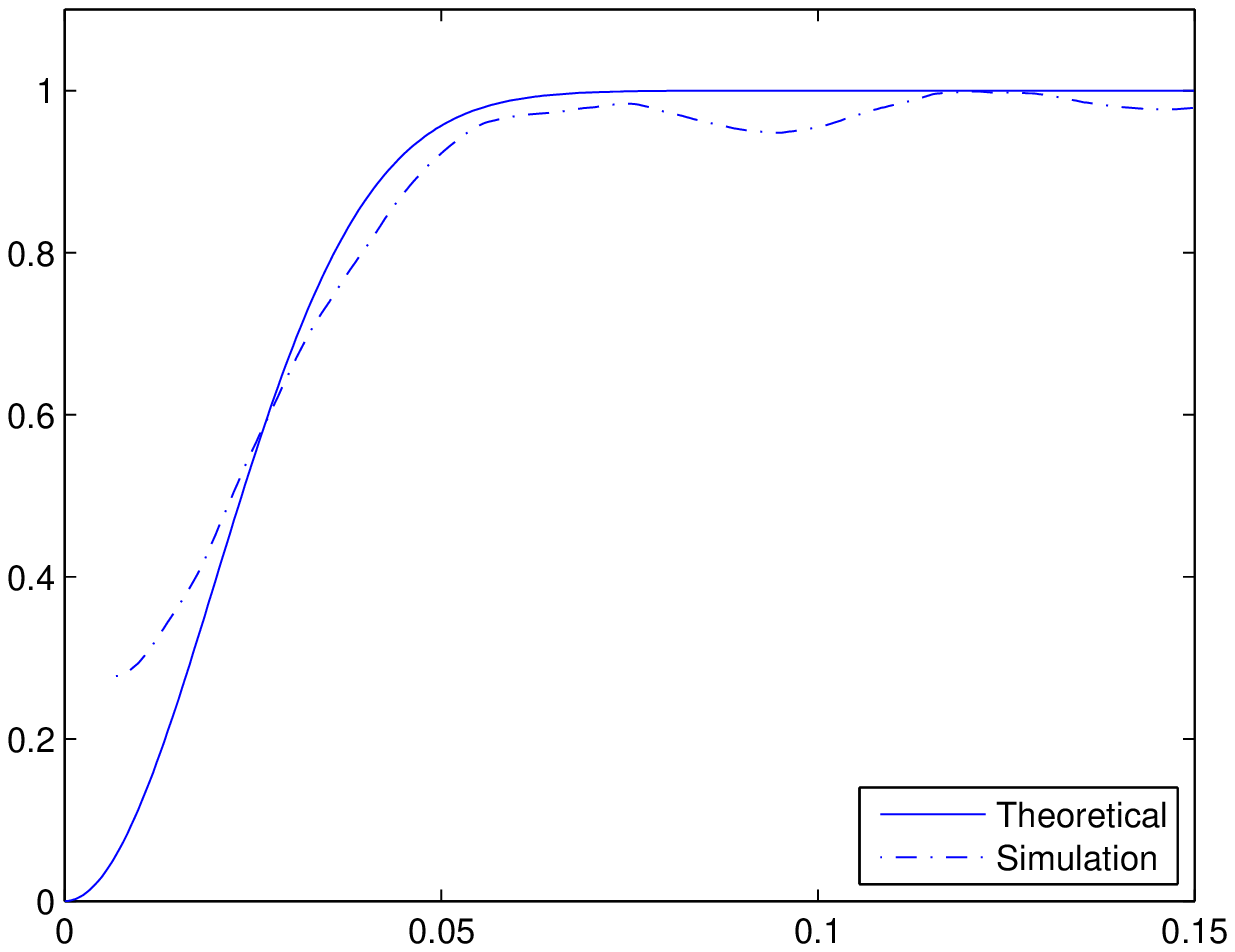}
        }%
        \subfigure[Mat{\'e}rn Model]{%
           \label{fig:MaternPCF}
           \includegraphics[width=0.48\textwidth]{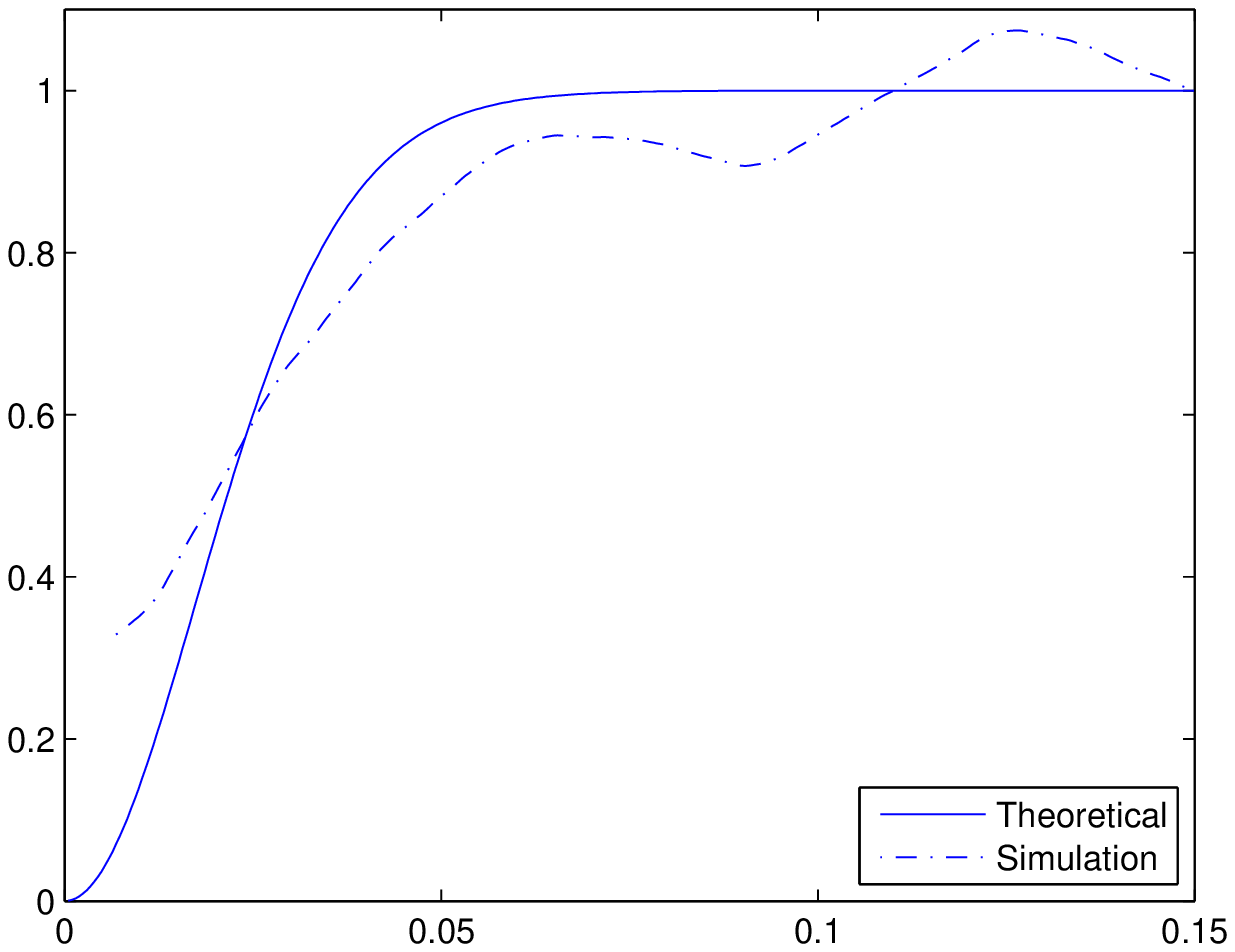}
        }\\ 
        \subfigure[Cauchy model]{%
            \label{fig:CauchyPCF}
           \includegraphics[width=0.48\textwidth]{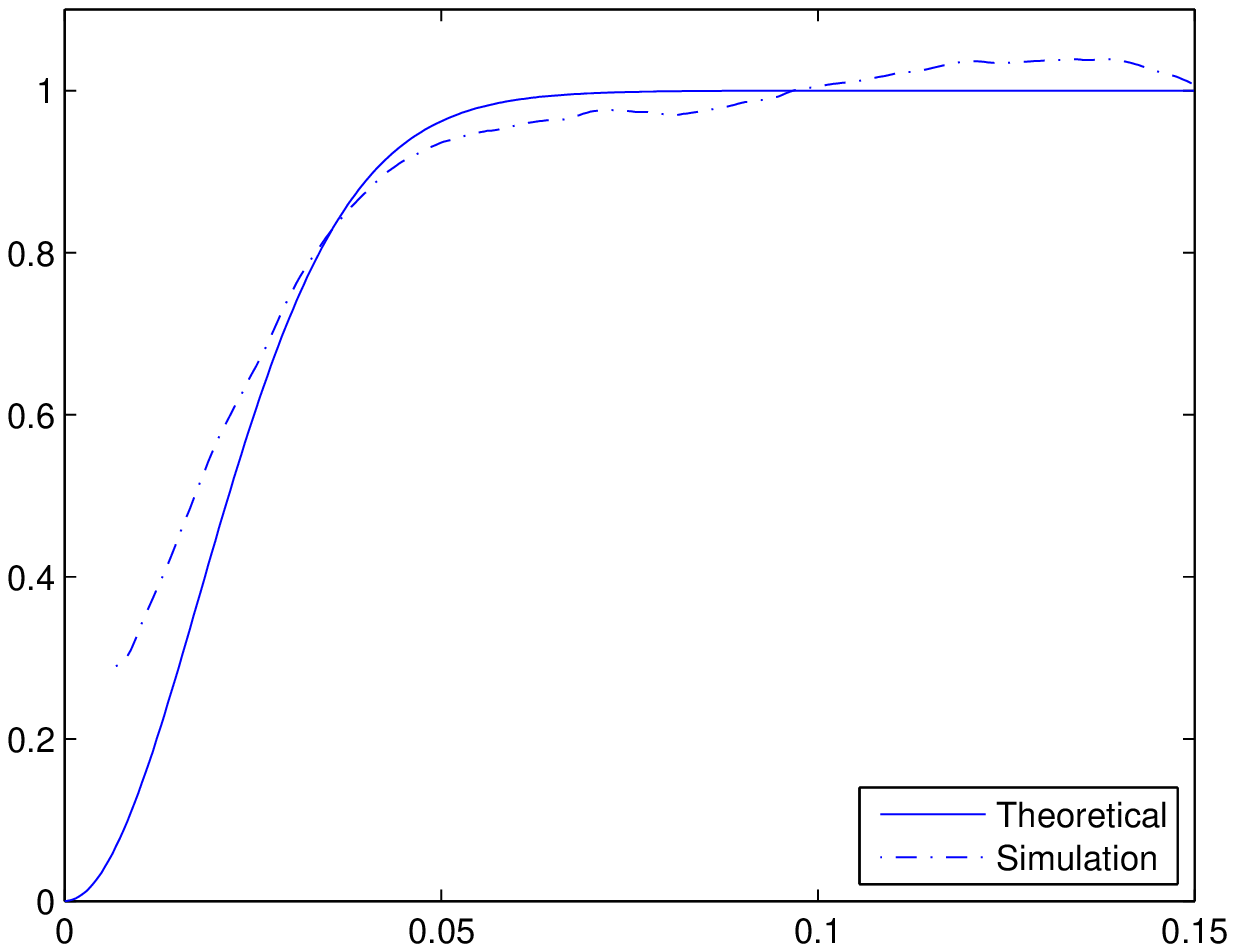}
        }%
				\caption{Pair Correlation Function.}
   \label{fig:PCF}
\end{figure}

Figure \ref{fig:CoalescenceTime} and Figure \ref{fig:StoppingTime} show the distributions of the coalescence time of $L_t$ and $U_t$ and the stopping time of the algorithm for $500$ Gaussian model DPP simulations with $\rho=50$. The stopping time refer to the time $-n$ required to simulate backwards such that coalescence of $L_t$ and $U_t$ occur at time $t=0$. Heuristically, we have shown that the stopping time is much lower than the upper bound $H|\Lambda|\log H|\Lambda|$, ($H\approx 57.5$).

\begin{figure}[htbp] 
	\centering
        \subfigure[Coalescence time of $L_t$ and $U_t$]{%
            \label{fig:CoalescenceTime}
            \includegraphics[width=0.48\textwidth]{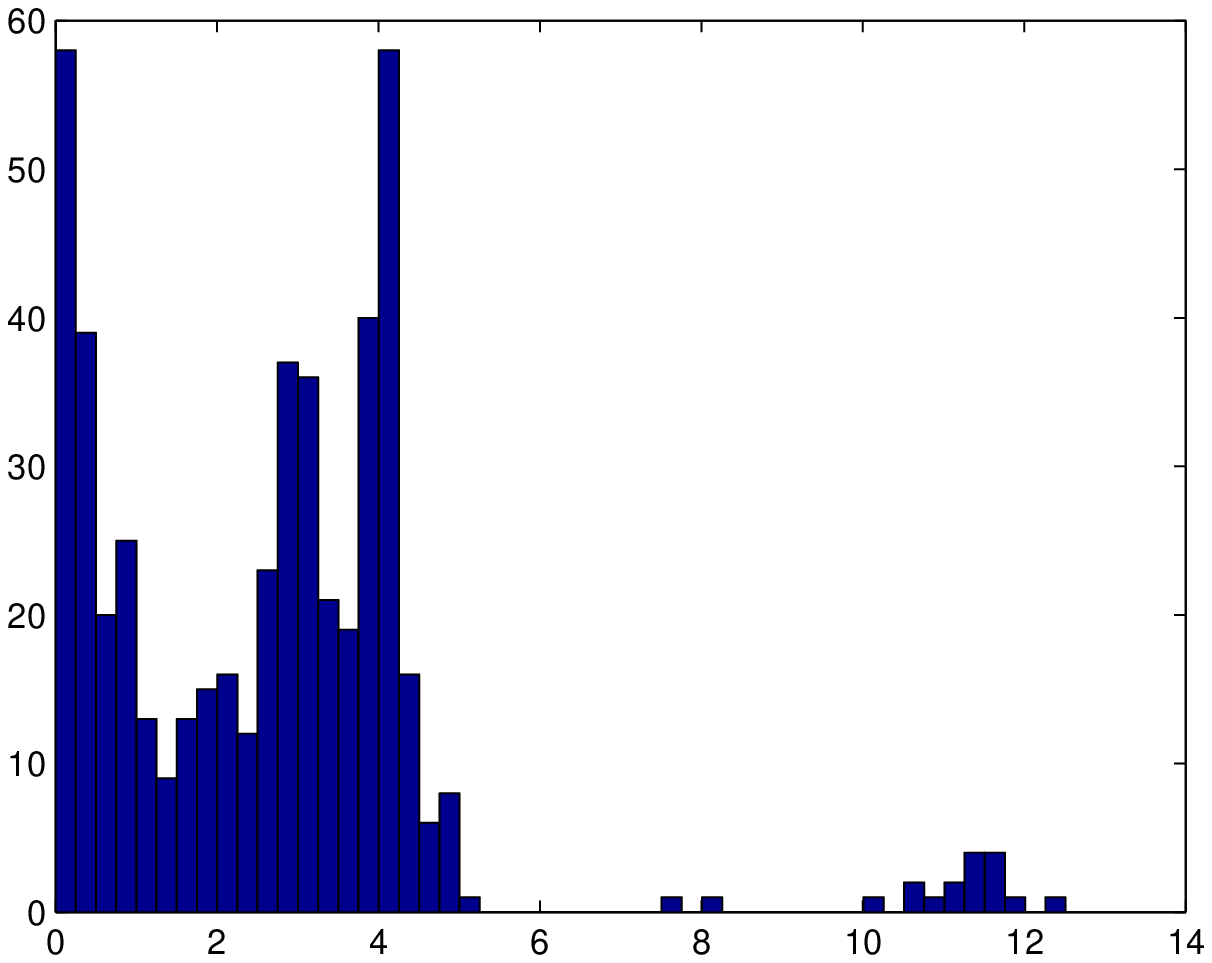}
        }%
        \subfigure[Stopping time]{%
           \label{fig:StoppingTime}
           \includegraphics[width=0.48\textwidth]{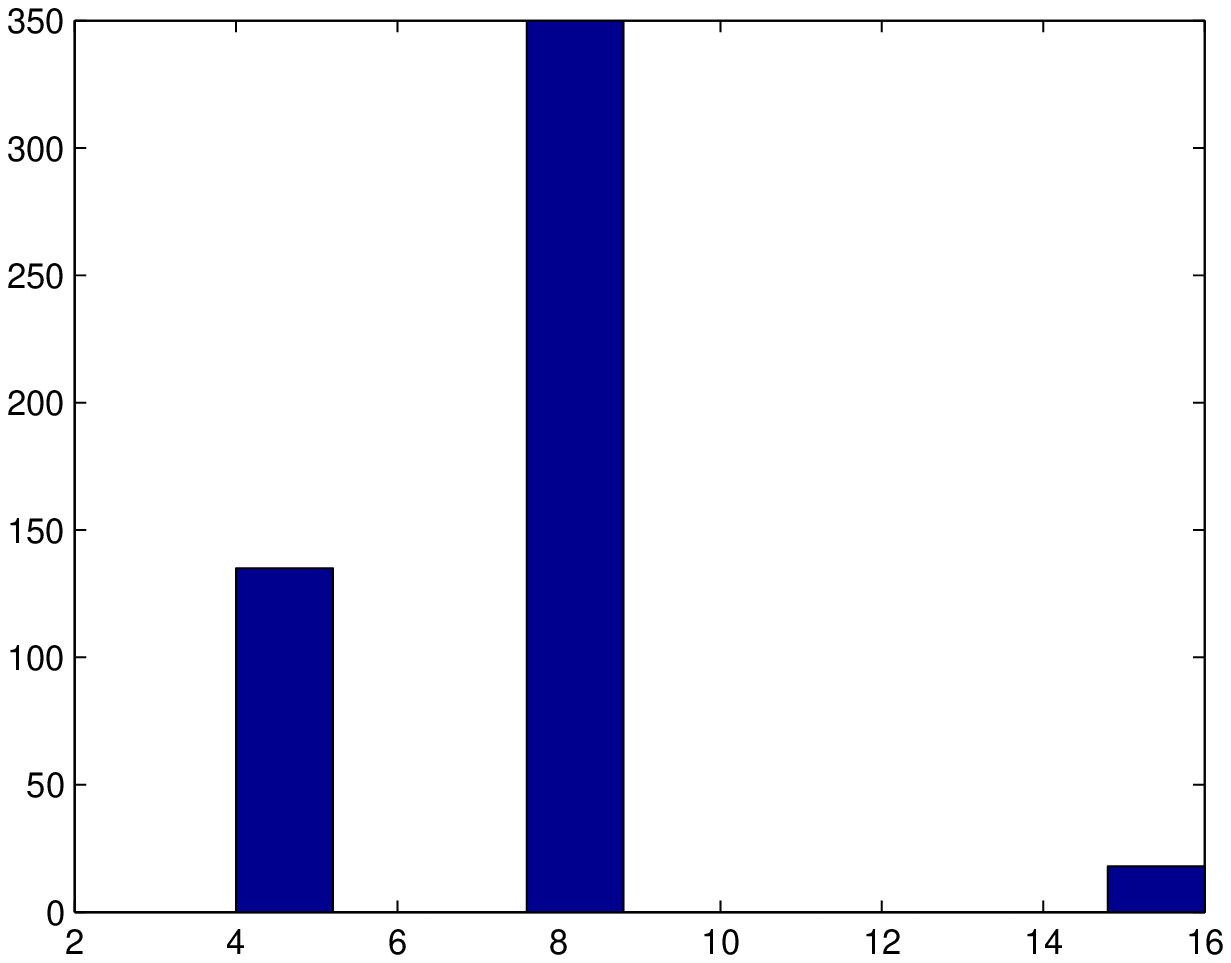}
        }\\%
				\caption{Histograms of coalescence time of $L_t$ and $U_t$ and the stopping time on $500$ Gaussian models with $\rho=50$ and $\alpha =0.04$.}
   \label{fig:Time}
\end{figure}


\providecommand{\bysame}{\leavevmode\hbox to3em{\hrulefill}\thinspace}
\providecommand{\MR}{\relax\ifhmode\unskip\space\fi MR }
\providecommand{\MRhref}[2]{%
  \href{http://www.ams.org/mathscinet-getitem?mr=#1}{#2}
}
\providecommand{\href}[2]{#2}

\end{document}